\newtheorem{thm}{Theorem}
\newtheorem{prop}{Proposition}
\newtheorem{lemma}{Lemma}
\DeclareSymbolFont{script}{U}{eus}{m}{n}
\DeclareMathSymbol{\Wedge}{0}{script}{"5E}
\newcommand{\intprod}{\makebox[10pt]{\rule{5pt}{.3pt}\rule{.3pt}{5pt}}}
\newcommand{\Rho}{\mathrm{P}}
\begin{document}
\title[Calabi Operator]
{A Calabi operator for\\ Riemannian locally symmetric spaces}
\author[E.F.~Costanza]{Federico Costanza}
\address{\hskip-\parindent
Centrum Fizyki Teoretycznej\\
Polska Akademia Nauk\\
02-668 Warszawa\\
Poland}
\email{efcostanza@gmail.com}
\author[M.G.~Eastwood]{Michael Eastwood}
\address{\hskip-\parindent
School of Mathematical Sciences\\
University of Adelaide\\ 
SA 5005\\ 
Australia}
\email{meastwoo@gmail.com}
\author[T.~Leistner]{Thomas Leistner}
\address{\hskip-\parindent
School of Mathematical Sciences\\
University of Adelaide\\ 
SA 5005\\ 
Australia}
\email{thomas.leistner@adelaide.edu.au}
\author[B.B.~McMillan]{Benjamin McMillan}
\address{\hskip-\parindent
Center for Complex Geometry\\
Institute for Basic Science\\
Daejeon 34126\\
South Korea}
\email{mcmillanbb@gmail.com}
\subjclass{53C35, 53A20} 

\begin{abstract} On a Riemannian manifold of constant curvature, the Calabi
operator is a second order linear differential operator that provides local
integrability conditions for the range of the Killing operator.  We generalise
this operator to provide linear second order local integrability conditions on
Riemannian locally symmetric spaces, whenever this is possible.  Specifically,
we show that this generalised operator always works in the irreducible case and
we identify precisely those products for which it fails.
\end{abstract}

\renewcommand{\subjclassname}{\textup{2010} Mathematics Subject Classification}

\thanks{This work was supported by Australian Research Council (Discovery 
Project DP190102360).}

\maketitle

\setcounter{section}{-1}
\section{Introduction}\label{introduction}
On any Riemannian manifold, the {\em Killing operator\/} is the linear first
order differential operator ${\mathcal{K}}:\Wedge^1\to\bigodot^2\!\Wedge^1$ 
given by 
$$X_a\mapsto\nabla_{(a}X_{b)},$$
where $\nabla_a$ is the metric connection and, following Penrose's
{\em abstract index\/} notation~\cite{OT} as we shall do throughout this
article, round brackets on the indices mean to take the symmetric part.  With
its index raised using the metric, a vector field $X^a$ in the kernel of
${\mathcal{K}}$ is precisely a {\em Killing field\/}.  We define the 
{\em Calabi operator} to be a certain linear second order differential operator
${\mathcal{C}}$ acting on $\bigodot^2\!\Wedge^1$.  Specifically, it is given by
\begin{equation}\label{calabi}h_{ab}\mapsto
\nabla_{(a}\nabla_{c)}h_{bd}-\nabla_{(b}\nabla_{c)}h_{ad}
-\nabla_{(a}\nabla_{d)}h_{bc}
+\nabla_{(b}\nabla_{d)}h_{ac}-R_{ab}{}^e{}_{[c}h_{d]e}
-R_{cd}{}^e{}_{[a}h_{b]e},
\end{equation}
where $R_{ab}{}^c{}_d$ is the Riemann curvature tensor (and square brackets on
the indices mean to take the skew part). In~\cite{C}, Calabi showed that if the
Riemannian metric $g_{ab}$ is constant curvature, i.e.
\begin{equation}\label{cc}
R_{abcd}={\mathrm{constant}}\times(g_{ac}g_{bd}-g_{bc}g_{ad}),\end{equation}
then these two operators are the first two in a locally exact complex
$$\textstyle\Wedge^1\xrightarrow{\,{\mathcal{K}}\,}\bigodot^2\!\Wedge^1
\xrightarrow{\,{\mathcal{C}}\,}\cdots$$
of linear differential operators (the rest of which are first order).
Nowadays~\cite{E,EG}, this may be seen as an instance of the {\em
Bernstein-Gelfand-Gelfand\/} complex in flat projective differential geometry
(noting that, according to Beltrami's Theorem~\cite{B}, a Riemannian manifold
is projectively flat if and only if it is constant curvature). In
three-dimensional flat space, the operator ${\mathcal{C}}$ was introduced by
Saint-Venant in 1864 (as the integrability conditions for a {\em strain\/}
in continuum mechanics to arise from a {\em displacement\/}~\cite{SV}).

Our aim in this article is to construct, from the operator ${\mathcal{C}}$,
local integrability conditions for the range of the Killing operator.  We
choose to start with ${\mathcal{C}}$ because it works in the constant curvature
case \cite{C} and because, as observed by Gasqui and Goldschmidt~\cite{GG}, it
has the right symbol potentially to give second order constraints.  We remark
right away, however, that if we allow higher order constraints, then Gasqui and
Goldschmidt~\cite{GG} have already found a {\em third order} operator
\cite[Th\'eor\`eme~7.2]{GG} that provides local integrability conditions on the
range of~${\mathcal{K}}$ for all Riemannian locally symmetric spaces (our
methods easily reproduce \cite[Th\'eor\`eme~7.2]{GG}, as we briefly discuss
in~\S\ref{third_order}).  We should also remark that we have not been able to
find an elementary geometric origin for the operator~${\mathcal{C}}$, such as
deformation of Riemannian curvature (and a detailed discussion of this point
may be found in~\S\ref{deformation}).  In this article, ${\mathcal{C}}$ arises
na\"{\i}vely by {\em prolonging\/} the Killing equation, as discussed
in~\S\ref{generalities}.  A more sophisticated origin for ${\mathcal{C}}$ is
via {\em parabolic geometry\/}~\cite{parabook} and, in particular, the 
{\em second BGG operators\/} of Hammerl, Somberg, Sou\v{c}ek, and
\v{S}ilhan~\cite[\S4]{HSSS} (see~\S\ref{projective} for a brief discussion).
The range of ${\mathcal{K}}$ may be interpreted as a {\em gauge freedom\/} in
general relativity (as in \cite[(5.7.11)]{OT} and~\cite[(C.2.17)]{W}).  This
provides one motivation for our study.  On a compact Riemannian manifold, this
interpretation of the range of ${\mathcal{K}}$ is discussed
in~\cite[\S12.21]{Besse} (and is used there in conjunction with the {\em Ebin
Slice Theorem\/}).  The principal ingredient in our approach is the {\em
prolongation connection\/} of \S\ref{generalities}.  This was used already by
Kostant~\cite{Kostant} and Geroch~\cite[(B.2)]{Geroch} in similar contexts 
and is part of the general theory in~\cite{HSSS}.  Apart from algebraic
considerations, such as arising from the {\em Lie triple systems\/}~(\ref{LTS})
of locally symmetric spaces, our methods use only the general theory of
connections and their curvature.  As such, they are more widely applicable and
we pursue a general theory in a follow-up article~\cite{CELM2}.

Only for constant curvature is it the case that
${\mathcal{C}}\circ{\mathcal{K}}=0$. Nevertheless, in the locally symmetric
case, i.e.~$\nabla_aR_{bcde}=0$, this composition is relatively simple and one
can pick out just a part of ${\mathcal{C}}$, let us call it ${\mathcal{L}}$, so
that ${\mathcal{L}}\circ{\mathcal{K}}=0$. Under mild additional assumptions, we 
will show that this can be done
in such a way that ${\mathcal{L}}$ provides local integrability conditions for
the range of~${\mathcal{K}}$. More precisely, notice that the right hand side
of (\ref{calabi}) is a tensor 
satisfying Riemann tensor symmetries and, indeed, with Young tableau
notation~\cite{F}, the Calabi complex on a constant curvature Riemannian
manifold starts with
$$\Wedge^1=\begin{picture}(6,6)(0,0)
\put(0,0){\line(1,0){6}}
\put(0,6){\line(1,0){6}}
\put(0,0){\line(0,1){6}}
\put(6,0){\line(0,1){6}}
\end{picture}\xrightarrow{\,{\mathcal{K}}\,}
\begin{picture}(12,6)(0,0)
\put(0,0){\line(1,0){12}}
\put(0,6){\line(1,0){12}}
\put(0,0){\line(0,1){6}}
\put(6,0){\line(0,1){6}}
\put(12,0){\line(0,1){6}}
\end{picture}\xrightarrow{\,{\mathcal{C}}\,}
\begin{picture}(12,12)(0,2)
\put(0,0){\line(1,0){12}}
\put(0,6){\line(1,0){12}}
\put(0,12){\line(1,0){12}}
\put(0,0){\line(0,1){12}}
\put(6,0){\line(0,1){12}}
\put(12,0){\line(0,1){12}}
\end{picture}\xrightarrow{\,\phantom{\mathcal{C}}\,}
\begin{picture}(12,18)(0,6)
\put(0,0){\line(1,0){6}}
\put(0,6){\line(1,0){12}}
\put(0,12){\line(1,0){12}}
\put(0,18){\line(1,0){12}}
\put(0,0){\line(0,1){18}}
\put(6,0){\line(0,1){18}}
\put(12,6){\line(0,1){12}}
\end{picture}\xrightarrow{\,\phantom{\mathcal{C}}\,}\cdots.$$
The Riemann curvature tensor $R_{abcd}$ defines a homomorphism of vector 
bundles 
\begin{equation}\label{key_hom}
{\mathcal{R}}:\Wedge^2=\begin{picture}(6,12)(0,2)
\put(0,0){\line(1,0){6}}
\put(0,6){\line(1,0){6}}
\put(0,12){\line(1,0){6}}
\put(0,0){\line(0,1){12}}
\put(6,0){\line(0,1){12}}
\end{picture}
\longrightarrow\begin{picture}(12,12)(0,2)
\put(0,0){\line(1,0){12}}
\put(0,6){\line(1,0){12}}
\put(0,12){\line(1,0){12}}
\put(0,0){\line(0,1){12}}
\put(6,0){\line(0,1){12}}
\put(12,0){\line(0,1){12}}
\end{picture}\quad\mbox{given by}\quad\mu_{cd}\longmapsto
2R_{ab}{}^e{}_{[c}\mu_{d]e}+2R_{cd}{}^e{}_{[a}\mu_{b]e}\end{equation}
and, in case that $\nabla_aR_{bcde}=0$, the following diagram commutes.
\begin{equation}\label{diagram}\raisebox{-25pt}{\begin{picture}(100,58)
\put(0,45){\makebox(0,0){$\Wedge^1$}}
\put(-1,35){\vector(0,-1){20}}
\put(100,35){\vector(0,-1){20}}
\put(10,45){\vector(1,0){80}}
\put(10,5){\vector(1,0){80}}
\put(0,5){\makebox(0,0){$\Wedge^2$}}
\put(100,45){\makebox(0,0){$\begin{picture}(12,6)(0,0)
\put(0,0){\line(1,0){12}}
\put(0,6){\line(1,0){12}}
\put(0,0){\line(0,1){6}}
\put(6,0){\line(0,1){6}}
\put(12,0){\line(0,1){6}}
\end{picture}$}}
\put(100,5){\makebox(0,0){$\begin{picture}(12,12)(0,0)
\put(0,0){\line(1,0){12}}
\put(0,6){\line(1,0){12}}
\put(0,12){\line(1,0){12}}
\put(0,0){\line(0,1){12}}
\put(6,0){\line(0,1){12}}
\put(12,0){\line(0,1){12}}
\end{picture}$}}
\put(-7,26){\makebox(0,0){$d$}}
\put(50,52){\makebox(0,0){${\mathcal{K}}$}}
\put(50,12){\makebox(0,0){${\mathcal{R}}$}}
\put(93,26){\makebox(0,0){${\mathcal{C}}$}}
\end{picture}}\end{equation}
Therefore, if we form the quotient bundle
\begin{equation}\label{quotient_bundle}\overline{\begin{picture}(12,12)(0,2)
\put(0,0){\line(1,0){12}}
\put(0,6){\line(1,0){12}}
\put(0,12){\line(1,0){12}}
\put(0,0){\line(0,1){12}}
\put(6,0){\line(0,1){12}}
\put(12,0){\line(0,1){12}}
\end{picture}}\equiv\begin{picture}(12,12)(0,2)
\put(0,0){\line(1,0){12}}
\put(0,6){\line(1,0){12}}
\put(0,12){\line(1,0){12}}
\put(0,0){\line(0,1){12}}
\put(6,0){\line(0,1){12}}
\put(12,0){\line(0,1){12}}
\end{picture}\,/{\mathcal{R}}(\Wedge^2)\qquad
\big(\mbox{alternatively, take}\enskip
\overline{\begin{picture}(12,12)(0,2)
\put(0,0){\line(1,0){12}}
\put(0,6){\line(1,0){12}}
\put(0,12){\line(1,0){12}}
\put(0,0){\line(0,1){12}}
\put(6,0){\line(0,1){12}}
\put(12,0){\line(0,1){12}}
\end{picture}}\equiv{\mathcal{R}}(\Wedge^2)^\perp
\subseteq\begin{picture}(12,12)(0,2)
\put(0,0){\line(1,0){12}}
\put(0,6){\line(1,0){12}}
\put(0,12){\line(1,0){12}}
\put(0,0){\line(0,1){12}}
\put(6,0){\line(0,1){12}}
\put(12,0){\line(0,1){12}}
\end{picture}\,\big),\end{equation}
and write ${\mathcal{L}}$ for the composition
$$\begin{picture}(12,6)(0,0)
\put(0,0){\line(1,0){12}}
\put(0,6){\line(1,0){12}}
\put(0,0){\line(0,1){6}}
\put(6,0){\line(0,1){6}}
\put(12,0){\line(0,1){6}}
\end{picture}\xrightarrow{\,{\mathcal{C}}\,}
\begin{picture}(12,12)(0,2)
\put(0,0){\line(1,0){12}}
\put(0,6){\line(1,0){12}}
\put(0,12){\line(1,0){12}}
\put(0,0){\line(0,1){12}}
\put(6,0){\line(0,1){12}}
\put(12,0){\line(0,1){12}}
\end{picture}\xrightarrow{\,\phantom{\mathcal{C}}\,}
\overline{\begin{picture}(12,12)(0,2)
\put(0,0){\line(1,0){12}}
\put(0,6){\line(1,0){12}}
\put(0,12){\line(1,0){12}}
\put(0,0){\line(0,1){12}}
\put(6,0){\line(0,1){12}}
\put(12,0){\line(0,1){12}}
\end{picture}}\,,$$
then we obtain a complex of linear differential operators
\begin{equation}\label{key_complex}\begin{picture}(6,6)(0,0)
\put(0,0){\line(1,0){6}}
\put(0,6){\line(1,0){6}}
\put(0,0){\line(0,1){6}}
\put(6,0){\line(0,1){6}}
\end{picture}\xrightarrow{\,{\mathcal{K}}\,}
\begin{picture}(12,6)(0,0)
\put(0,0){\line(1,0){12}}
\put(0,6){\line(1,0){12}}
\put(0,0){\line(0,1){6}}
\put(6,0){\line(0,1){6}}
\put(12,0){\line(0,1){6}}
\end{picture}\xrightarrow{\,{\mathcal{L}}\,}
\overline{\begin{picture}(12,12)(0,2)
\put(0,0){\line(1,0){12}}
\put(0,6){\line(1,0){12}}
\put(0,12){\line(1,0){12}}
\put(0,0){\line(0,1){12}}
\put(6,0){\line(0,1){12}}
\put(12,0){\line(0,1){12}}
\end{picture}}\,.\end{equation}
The main result of this article is that, in most cases, this complex is
locally exact. The precise statement is as follows. 
\begin{thm}\label{main_thm}
Suppose $M$ is a Riemannian locally symmetric space. If we write $M$ as a 
product of irreducibles
$$M=M_1\times M_2\times\cdots\times M_k,$$
then the complex \eqref{key_complex} is locally exact unless $M$ has at least
one flat factor and at least one Hermitian factor, in which case
\eqref{key_complex} fails to be locally exact. (For example, the complex 
\eqref{key_complex} is
locally exact on $S^2\times S^2$ or $S^3\times S^1$ but not on 
$S^2\times S^1$.)
\end{thm}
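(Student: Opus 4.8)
\section*{Proof proposal}

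The plan is to turn the local exactness of \eqref{key_complex} into a finite-dimensional, purely algebraic question about the parallel curvature tensor, and then to settle that question factor by factor. Since $M$ is locally symmetric it is real-analytic and locally homogeneous, and the Killing operator $\mathcal{K}$ is of finite type, its kernel (the Killing fields) being finite-dimensional. For a finite-type operator with analytic coefficients, local exactness of its compatibility complex is equivalent to formal exactness, so it suffices to prolong. First I would prolong $\mathcal{K}$ in the standard way: writing $\mu_{ab}=\nabla_{[a}X_{b]}$ and using the Ricci identity together with $\nabla_aR_{bcde}=0$, the Killing equation closes up into a linear connection $D$ on $\Wedge^1\oplus\Wedge^2$ whose parallel sections are exactly the Killing fields. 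The inhomogeneous equation $\mathcal{K}X=h$ then becomes the problem of finding a section $s=(X,\mu)$ with $Ds=\sigma(h)$, where $\sigma(h)$ is a one-form valued in $\Wedge^1\oplus\Wedge^2$ assembled from $h$ and $\nabla h$.

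The role of $\mathcal{L}$ is now transparent. The commuting diagram \eqref{diagram} gives $\mathcal{C}(\mathcal{K}X)=\mathcal{R}(dX)$, so after passing to the quotient \eqref{quotient_bundle} the condition $\mathcal{L}(h)=0$ is exactly $\mathcal{C}(h)\in\mathcal{R}(\Wedge^2)$, and this is precisely the first integrability condition extracted from $Ds=\sigma(h)$: there must exist a two-form $\nu$ with $\mathcal{R}(\nu)=\mathcal{C}(h)$, to be realised as (twice) the skew part of $\nabla X$. Granted $\mathcal{L}(h)=0$, local exactness then hinges on whether such a $\nu$ can be chosen and propagated through the remaining prolongations without further obstruction. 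Because the curvature is parallel, every higher obstruction is governed by repeated actions of $\mathcal{R}$, and the whole matter collapses to the vanishing of a single finite-dimensional cohomology group $H$ built algebraically from $\mathcal{R}$ and the holonomy (equivalently isotropy) representation. Local exactness of \eqref{key_complex} is then equivalent to $H=0$.

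By the de Rham decomposition the holonomy representation, and with it $H$, refines along $M=M_1\times\cdots\times M_k$: the bundles $\Wedge^1$, $\bigodot^2\Wedge^1$ and the Riemann-symmetry bundle all split into factor and mixed pieces, and $\mathcal{R}$ preserves this splitting. For a single irreducible factor the holonomy acts irreducibly, the metric is Einstein, and the curvature operator is nondegenerate on the summand that matters; a direct computation then yields $H=0$. This disposes of the irreducible case and shows that any failure must be carried by the mixed pieces, where a parallel tensor on one factor is paired with a parallel tensor on another.

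The decisive point is to determine exactly which pairings produce a nonzero class. Parallel one-forms occur precisely along flat factors (holonomy irreducibility rules them out on any curved irreducible factor), while a parallel two-form on a curved factor occurs precisely on a Hermitian factor, where it is a multiple of the K\"ahler form. Pairing a flat parallel one-form with a Hermitian K\"ahler form yields a mixed $h$ with $\mathcal{L}(h)=0$ for which $\mathcal{K}X=h$ has no local solution, so $H\neq 0$ exactly when both a flat and a Hermitian factor are present; in particular a parallel two-form arising from two flat directions is harmless, consistent with exactness on flat space. I expect the main obstacle to be this final step: exhibiting the flat-times-Hermitian class explicitly and proving its nonexactness, while simultaneously establishing $H=0$ in every remaining configuration of factors. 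Both halves require a uniform grip on the curvature operators of the irreducible symmetric spaces and a careful bookkeeping of the mixed blocks, and it is here that the representation theory of the isotropy groups carries the argument.
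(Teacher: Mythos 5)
There is a genuine gap, and it sits at the very first step. You assert that, since $\mathcal{K}$ is of finite type with analytic coefficients, local exactness of \eqref{key_complex} is equivalent to formal exactness, and hence reduces to the vanishing of a finite-dimensional cohomology group $H$ built algebraically from $\mathcal{R}$ and the holonomy. But the Spencer--Goldschmidt machinery you are invoking decides exactness of the \emph{full} compatibility complex of $\mathcal{K}$, whereas the operator $\mathcal{L}$ in \eqref{key_complex} is deliberately only a \emph{part} of the second-order conditions, obtained by quotienting $\mathcal{C}$ by $\mathcal{R}(\Wedge^2)$. Whether this partial set of conditions suffices is exactly what the theorem is about and cannot be delegated to general formal theory: Gasqui and Goldschmidt's third-order conditions are \emph{always} locally sufficient on Riemannian locally symmetric spaces, yet the second-order operator $\mathcal{L}$ fails on $S^2\times S^1$, so no argument that is uniform in the formal machinery can produce the dichotomy. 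Your group $H$ is never defined, and no mechanism is given relating it to \eqref{key_complex}. The paper makes the reduction concrete instead: using the prolongation connection \eqref{prolongation_connection}, the flat subbundle $F=\Wedge^1\oplus K$, and the injectivity of \eqref{LCcurv}, local exactness is reduced (Proposition~\ref{prop2}) to the tensor statement that any $X_{abc}\in\Gamma(\Wedge^1\otimes C)$ satisfying \eqref{old_conclusion_by_new_route} must vanish.

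The second gap is the quantitative heart of the matter, which your sketch replaces by the phrase ``the curvature operator is nondegenerate on the summand that matters; a direct computation then yields $H=0$.'' What is actually needed --- and what the paper proves as Theorem~\ref{eigenvalues_of_curvature}, via the traced Lie triple identity \eqref{LTS_traced}, Riemannian positivity, and the Ambrose--Singer theorem --- is that, normalised so $R_{ab}=g_{ab}$, every eigenvalue of $\omega_{ab}\mapsto R_{ab}{}^{cd}\omega_{cd}$ lies in $[0,2]$, with $2$ attained precisely on the K\"ahler form of a Hermitian factor. Nondegeneracy is not the issue: the eigenvalue $2$ \emph{does} occur on Hermitian irreducibles, and the would-be solution $Y_{abc}=J_{ab}\phi_c$ must be killed by the separate trace identity $Y_{ab}{}^b=0$ together with nondegeneracy of $J_{ab}$. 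Likewise, for $M\times\mathbb{R}^p$ with $M$ non-Hermitian, exactness requires Lemma~\ref{backward_prolongation}, whose proof is a delicate contradiction (all eigenvalues in $[0,2)$, eigenvalue sum $n$, sum of squares $2n$); nothing in your outline produces it, and the scale-invariant operator $S_{ab}{}^{cd}=R_{abc}{}^eS_{de}$ needed to run the eigenvalue argument factor-by-factor is absent. Your identification of the obstruction mechanism is essentially right, though imprecise: the counterexample class is $h_{b\bar b}=\phi_b\theta_{\bar b}$ where $\phi$ is a local primitive with $\nabla_{[a}\phi_{b]}=J_{ab}$ (not $J_{ab}$ itself, which has the wrong valence) paired with a parallel $1$-form $\theta_{\bar b}$ on the flat factor. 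But as you yourself concede, both exhibiting this class's nonexactness and proving exactness in every other configuration remain undone, and those two halves are the substance of the theorem rather than its final bookkeeping.
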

In the statement of this theorem, we are supposing some preliminaries
concerning the theory of Riemannian locally symmetric spaces, specifically that
such spaces locally split as a product of {\em irreducibles\/} (those that
split no further).  The one-dimensional factors are flat.  Otherwise, there is
a rough divide into {\em Hermitian\/} and {\em non-Hermitian\/} types, where
Hermitian (as typified by complex projective space with its Fubini-Study
metric) is characterised by the existence of a non-zero $2$-form $\omega_{bc}$
with $\nabla_a\omega_{bc}=0$.  Moreover, these irreducible factors are Einstein
(with non-zero Einstein constant).  Both observations are due to the following
elementary fact.
\begin{lemma}\label{L-one}
Any parallel symmetric $2$-tensor $h_{ab}$ on an irreducible locally symmetric
space is a constant multiple of the metric.
\end{lemma}
\begin{proof}
If $h_{ab}$ is parallel and symmetric, the corresponding endomorphism 
$h^b{}_c\equiv g^{ab}h_{ac}$ is also parallel and is
diagonalisable over the reals with constant eigenvalues.  If there were two
distinct eigenvalues, the corresponding eigendistributions would be parallel,
and this already contradicts irreducibility.
\end{proof}
Since the Ricci tensor of a locally symmetric space is parallel, this lemma
implies that the metric is Einstein.  Moreover, it is well-known that the
Einstein constant has to be non-zero (for completeness, we prove this at the 
end of \S\ref{eigenvalues}).

If the locally symmetric space admits a non-zero parallel $2$-form $\omega_{ab}$, then
$\omega_a{}_c\omega^c{}_b$ is symmetric and parallel, hence a constant multiple
of the metric.  As $\omega_{ab}$ is skew, the corresponding endomorphism cannot
have real eigenvalues and therefore the constant multiple has to be negative,
in which case the metric is obliged to be K\"ahler with $\omega_{ab}$ a
constant multiple of the K\"ahler form~$J_{ab}$.  Helgason's classic text
\cite{H} provides further details on locally symmetric spaces and, indeed, a
complete classification (due to Cartan).  We shall not need this
classification.

The authors would like to thank Timothy Moy and Stuart Teisseire for their
invaluable contributions to a seminar series during which our proof of this
result was developed.  We would also thank Renato Bettiol for alerting us to
references~\cite{BG,G,KSW}.

Finally, we thank an anonymous referee for several valuable
suggestions/simplifications.

\section{Some generalities and the prolongation connection}
\label{generalities}
To begin, we need only assume that $g_{ab}$ is semi-Riemannian and locally
symmetric, meaning that $\nabla_aR_{bc}{}^d{}_e=0$, where $\nabla_a$ is the
Levi-Civita connection associated to $g_{ab}$ and $R_{ab}{}^c{}_d$ is the
curvature tensor of $\nabla_a$, characterised by
$$(\nabla_a\nabla_b-\nabla_b\nabla_a)X^c=R_{ab}{}^c{}_dX^d.$$
For a general semi-Riemannian metric, it is easy to check that
\begin{equation}\label{criterion}
h_{bc}=\nabla_{(b}\sigma_{c)}\quad\Leftrightarrow\quad
\left[\!\begin{array}{c}h_{bc}\\ 2\nabla_{[c}h_{d]b}
\end{array}\!\right]
=\left[\begin{array}{c}\nabla_b\sigma_c-\mu_{bc}\\ 
\nabla_b\mu_{cd}-R_{cd}{}^e{}_b\sigma_e
\end{array}\right],\enskip\mbox{for some }\mu_{bc}=\mu_{[bc]}\end{equation}
and we are, therefore, led to the {\em prolongation connection\/}:
\begin{equation}\label{prolongation_connection}
E\equiv\begin{array}{c}\Wedge^1\\[-4pt] \oplus\,\\[-1pt] \Wedge^2\end{array}
\ni\left[\!\begin{array}{c}\sigma_c\\ \mu_{cd}\end{array}\!\right]
\stackrel{\nabla_b}{\longmapsto}
\left[\begin{array}{c}\nabla_b\sigma_c-\mu_{bc}\\ 
\nabla_b\mu_{cd}-R_{cd}{}^e{}_b\sigma_e
\end{array}\right]\in\Wedge^1\otimes E\end{equation}
with curvature, in the locally symmetric case, given by
\begin{equation}\label{curvature}(\nabla_a\nabla_b-\nabla_b\nabla_a)
\left[\!\begin{array}{c}\sigma_c\\ \mu_{cd}\end{array}\!\right]
=\left[\!\begin{array}{c}0\\ 
2R_{ab}{}^e{}_{[c}\mu_{d]e}+2R_{cd}{}^e{}_{[a}\mu_{b]e}
\end{array}\!\right].\end{equation}
Notice that we are led to the homomorphism
${\mathcal{R}}:\Wedge^2\to\begin{picture}(12,12)(0,2) 
\put(0,0){\line(1,0){12}}
\put(0,6){\line(1,0){12}}
\put(0,12){\line(1,0){12}}
\put(0,0){\line(0,1){12}}
\put(6,0){\line(0,1){12}}
\put(12,0){\line(0,1){12}}
\end{picture}\subset\Wedge^2\otimes\Wedge^2$, appearing in the 
construction (\ref{key_hom}) of the operator~${\mathcal{L}}$
in~\S\ref{introduction}. We remark that the terminology `prolongation
connection' comes from `prolonging' the Killing equation: see
Proposition~\ref{prolongation_isomorphism}.

By differentiating the condition $\nabla_aR_{bc}{}^d{}_e=0$, we find that
\begin{equation}\label{LTS} R_{ab}{}^e{}_{[c}R_{d]e}{}^f{}_g
+R_{cd}{}^e{}_{[a}R_{b]e}{}^f{}_g=0.\end{equation} Indeed, a tensor with
Riemann tensor symmetries ($R_{abcd}=R_{[ab][cd]}$ and $R_{[abc]d}=0$) and
satisfying (\ref{LTS}) is called a {\em Lie triple system\/}~\cite{eschenbu,H}.
In any case, if we write $K\subseteq\Wedge^2$ for the kernel of
(\ref{key_hom}), then $R_{abcd}$ is actually a section of $K\odot
K\subseteq\Wedge^2\odot\Wedge^2$. \big(Although we shall not need the general
theory of locally symmetric spaces, we remark that the bundle $K$ is an
implicit feature of this theory~\cite{eschenbu}. Specifically, the Lie algebra
${\mathfrak{g}}$ of germs of Killing fields at a chosen basepoint~$p$, admits a
{\em Cartan decomposition\/}~\cite[\S4]{eschenbu}
$${\mathfrak{g}}={\mathfrak{k}}\oplus{\mathfrak{p}},$$
where ${\mathfrak{k}}$ denotes the subalgebra of Killing fields that vanish
at~$p$. As a vector space, we may identify ${\mathfrak{p}}$ with $T_pM$. The
Lie bracket $[{\mathfrak{k}},{\mathfrak{p}}]\subset{\mathfrak{p}}$ preserves
the metric $g_{ab}$ on $T_pM$ and we therefore obtain a Lie algebra
homomorphism ${\mathfrak{k}}\to\Wedge^2{\mathfrak{p}}^*$, which turns out to be
injective. The Cartan decomposition is sufficient to compute the Riemann
curvature tensor~\cite[Theorem~3]{eschenbu} and we find that the image of
${\mathfrak{k}}$ in $\Wedge^2{\mathfrak{p}}^*=\Wedge_p^2$ is~$K_p$.\big)
Observe that the subbundle $K\subseteq\Wedge^2$ is preserved by the Levi-Civita
connection. The formula (\ref{prolongation_connection}) now shows that the
subbundle
$$F\equiv\begin{array}{c}\;\Wedge^1\\[-4pt] \oplus\\[-1pt] K\end{array}
\subseteq
\begin{array}{c}\Wedge^1\\[-4pt] \oplus\;\\[-1pt] \Wedge^2\end{array}=E$$
is preserved by the prolongation connection and, by construction, this
subbundle is flat.

Restating the criterion (\ref{criterion}) in terms of the prolongation 
connection, we find that
$$h_{bc}=\nabla_{(b}\sigma_{c)}\quad\Leftrightarrow\quad
\left[\!\begin{array}{c}h_{bc}\\ 2\nabla_{[c}h_{d]b}
\end{array}\!\right]
=\nabla_b\left[\begin{array}{c}\sigma_c\\ 
\mu_{cd}\end{array}\right],\enskip\mbox{for some }\mu_{bc}=\mu_{[bc]}.$$
A straightforward calculation shows that, for $h_{bc}$ symmetric,
\begin{equation}\label{straightforward_calculation}
\nabla_a\left[\!\begin{array}{c}h_{bc}\\ 2\nabla_{[c}h_{d]b}
\end{array}\!\right]
-\nabla_b\left[\!\begin{array}{c}h_{ac}\\ 2\nabla_{[c}h_{d]a}
\end{array}\!\right]=\left[\!\begin{array}{c}0\\ ({\mathcal{C}}h)_{abcd}
\end{array}\!\right],\end{equation}
where ${\mathcal{C}}:\begin{picture}(12,6)(0,0)
\put(0,0){\line(1,0){12}}
\put(0,6){\line(1,0){12}}
\put(0,0){\line(0,1){6}}
\put(6,0){\line(0,1){6}}
\put(12,0){\line(0,1){6}}
\end{picture}\to\begin{picture}(12,12)(0,2)
\put(0,0){\line(1,0){12}}
\put(0,6){\line(1,0){12}}
\put(0,12){\line(1,0){12}}
\put(0,0){\line(0,1){12}}
\put(6,0){\line(0,1){12}}
\put(12,0){\line(0,1){12}}
\end{picture}\subset\Wedge^2\otimes\Wedge^2$
is the operator~(\ref{calabi}). It is interesting to note, however, that this 
straightforward calculation gives several alternative formul{\ae} 
for~${\mathcal{C}}$, for example
\begin{equation}\label{alternative_calabi}
\textstyle h_{ab}\mapsto 
2\big[\nabla_c\nabla_{[a}h_{b]d}-\nabla_d\nabla_{[a}h_{b]c}
-R_{ab}{}^e{}_{[c}h_{d]e}\big]\end{equation}
(and this one will be useful later).
In any case, from (\ref{curvature}), we
arrive at a {\em necessary\/} condition for $h_{ab}$ to be in the range of the
Killing operator, namely that $({\mathcal{C}}h)_{abcd}$ be in the range of the
homomorphism~(\ref{key_hom}). This is exactly the condition that was built into
the operator ${\mathcal{L}}$ in \S\ref{introduction} and reformulated there as
(\ref{key_complex}) being a complex. 

To summarise \S\ref{generalities}, in the prolongation connection
(\ref{prolongation_connection}) we have found a geometric \mbox{formulation} of
the necessary condition to be in the range of the Killing operator, afforded by
commutativity of the diagram~(\ref{diagram}). This reasoning pertains for any
{\em semi}-Riemannian locally symmetric metric but now we ask whether this
condition is locally {\em sufficient\/}, and to answer this question we shall
now restrict attention to the {\em Riemannian\/} case.

\section{The Riemannian locally symmetric case}
We set $C\equiv K^\perp\subseteq\Wedge^2$ so that we have an orthogonal
decomposition $\Wedge^2=K\oplus C$ preserved by~$\nabla_a$. In particular, if
$\mu_{cd}\in\Gamma(C)$, then $R_{ab}{}^e{}_{[c}\mu_{d]e}\in\Gamma(K\otimes C)$.
Now, by construction, the homomorphism 
$$\Wedge^2\supseteq C\ni\mu_{cd}\stackrel{{\mathcal{R}}}{\longmapsto}
2R_{ab}{}^e{}_{[c}\mu_{d]e}+2R_{cd}{}^e{}_{[a}\mu_{b]e}\in
\Wedge^2\otimes\Wedge^2$$
is injective but, since $2R_{ab}{}^e{}_{[c}\mu_{d]e}\in\Gamma(K\otimes C)$,
it follows that
\begin{equation}\label{LCcurv}
C\longrightarrow K\otimes C\subseteq\Wedge^2\otimes C\quad\mbox{given by}
\quad\mu_{cd}\longmapsto 2R_{ab}{}^e{}_{[c}\mu_{d]e}\end{equation}
is injective. We may feed these observations back into 
(\ref{criterion}) in an attempt to find the range of the Killing 
operator. Looking back at (\ref{prolongation_connection}), 
we may use the decomposition
$$\mbox{\large $E={}\quad$}
\raisebox{-10pt}{$\Wedge^2=\left\lbrace\rule{0pt}{20pt}\right.$}
\hspace{-7pt}\begin{array}{c}
\Wedge^1\\[-4pt] \oplus\\[-1pt] K\\[-4pt] \oplus\\[-1pt] C\end{array}
\hspace{-6pt}\raisebox{13pt}{$\left.\rule{0pt}{20pt}\right\rbrace=F$}$$
to rewrite the prolongation connection as
\begin{equation}\label{refined}E=\begin{array}{c}
\Wedge^1\\[-4pt] \oplus\\[-1pt] K\\[-4pt] \oplus\\[-1pt] C\end{array}
\ni\left[\!\begin{array}{c}\sigma_c\\ \lambda_{cd}\\ \theta_{cd}
\end{array}\!\right]
\stackrel{\nabla_b}{\longmapsto}
\left[\begin{array}{c}\nabla_b\sigma_c-\lambda_{bc}-\theta_{bc}\\ 
\nabla_b\lambda_{cd}-R_{cd}{}^e{}_b\sigma_e\\ \nabla_b\theta_{cd}
\end{array}\right]\in\Wedge^1\otimes E\end{equation}
with curvature given by
\begin{equation}\label{refined_curvature}(\nabla_a\nabla_b-\nabla_b\nabla_a)
\left[\!\begin{array}{c}
\sigma_c\\ \lambda_{cd}\\ \theta_{cd}\end{array}\!\right]
=\left[\!\begin{array}{c}0\\ 
2R_{cd}{}^e{}_{[a}\theta_{b]e}\\
2R_{ab}{}^e{}_{[c}\theta_{d]e}
\end{array}\!\right]
\in\begin{array}{c}\Wedge^2\otimes\Wedge^1\\[-4pt] 
\oplus\\[-3pt] C\otimes K\\[-4pt] \oplus\\[-3pt] K\otimes C
\end{array}\!\!.\end{equation}
If we write
$$\left[\!\begin{array}{c}h_{bc}\\ 2\nabla_{[c}h_{d]b}
\end{array}\!\right]\in\Wedge^1\otimes E\quad\mbox{as}\quad
\left[\!\begin{array}{c}\sigma_{bc}\\ \lambda_{bcd}\\ \theta_{bcd}
\end{array}\!\right]\in\;
\Wedge^1\otimes
\begin{array}{c}\Wedge^1\\[-4pt] \oplus\\[-3pt] K\\[-4pt] \oplus\\[-3pt]
C\end{array},$$
then, from (\ref{refined}) and~(\ref{refined_curvature}), we see that
$({\mathcal{C}}h)_{abcd}$ being in the range of the homomorphism
(\ref{key_hom}) implies that
\begin{equation}\label{key_implication}
\nabla_a\theta_{bcd}-\nabla_b\theta_{acd}=2R_{ab}{}^e{}_{[c}\theta_{d]e}
\end{equation}
for some $\theta_{cd}\in\Gamma(C)\subseteq\Gamma(\Wedge^2)$. We have already 
noted that (\ref{LCcurv}) 
is injective. Therefore, the section $\theta_{cd}\in\Gamma(C)$ is uniquely
determined and, if $\theta_{bcd}\in\Wedge^1\otimes C$ is to be in the range of
the Levi-Civita connection $C\to\Wedge^1\otimes C$ (as is necessary
from~(\ref{refined})), then the only possibility is that
$\theta_{bcd}=\nabla_b\theta_{cd}$. {\em Assuming this to be the case}, we may
now consider
$$\left[\!\begin{array}{c}\sigma_{bc}\\ \lambda_{bcd}\\ \theta_{bcd}
\end{array}\!\right]
-\nabla_b\left[\!\begin{array}{c}0\\ 0\\ \theta_{cd}
\end{array}\!\right]
=\left[\!\begin{array}{c}\sigma_{bc}+\theta_{bc}\\ \lambda_{bcd}\\ 0
\end{array}\!\right].$$
This is a section of $\Wedge^1\otimes F$ and, by construction, maps to zero 
under $\Wedge^1\otimes F\to\Wedge^2\otimes F$. Since $F$ is flat, the sequence 
$$F\xrightarrow{\,\nabla\,}\Wedge^1\otimes F
\xrightarrow{\,\nabla\,}\Wedge^2\otimes F$$
is a locally exact complex and we have proved the following. 
\begin{prop}\label{prop1}
Suppose $M$ is a Riemannian locally symmetric space. In order to show that the
complex \eqref{key_complex} is locally exact, it suffices to show that if
$\theta_{bcd}\in\Gamma(\Wedge^1\otimes C)$ satisfies \eqref{key_implication}
for some (uniquely determined)
$\theta_{cd}\in\Gamma(C)\subseteq\Gamma(\Wedge^2)$, then
$\nabla_b\theta_{cd}=\theta_{bcd}$.
\end{prop}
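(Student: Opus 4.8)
The plan is to prove the reduction by running the local exactness argument for \eqref{key_complex} as far as the flat theory permits and then isolating the single residual obstruction. Suppose, then, that $h_{bc}$ is symmetric and lies in the kernel of $\mathcal{L}$, so that $(\mathcal{C}h)_{abcd}$ is in the range of \eqref{key_hom}. By the criterion \eqref{criterion}, showing $h_{bc}=\nabla_{(b}\sigma_{c)}$ locally is equivalent to showing that the lift $s_b:=\bigl(h_{bc},\,2\nabla_{[c}h_{d]b}\bigr)\in\Gamma(\Wedge^1\otimes E)$ lies locally in the range of the prolongation connection $\nabla\colon E\to\Wedge^1\otimes E$. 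First I would decompose $s_b$ along $E=\Wedge^1\oplus K\oplus C$ as $(\sigma_{bc},\lambda_{bcd},\theta_{bcd})$, exactly as in the discussion preceding \eqref{key_implication}.

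Next I would extract the obstruction. By \eqref{straightforward_calculation} the antisymmetrised derivative $2\nabla_{[a}s_{b]}$ has vanishing $\Wedge^1$-component and $\Wedge^2$-component equal to $(\mathcal{C}h)_{abcd}$, while \eqref{refined_curvature} records the curvature of the prolongation connection. The hypothesis $\mathcal{L}h=0$ says exactly that $(\mathcal{C}h)_{abcd}=\mathcal{R}(\theta)_{abcd}$ for some $\theta_{cd}\in\Gamma(C)$, and injectivity of \eqref{LCcurv} forces this $\theta_{cd}$ to be unique; reading off the $C$-component of $2\nabla_{[a}s_{b]}$ then yields \eqref{key_implication}.

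With this in hand I would attempt the correction that the refined connection \eqref{refined} makes available. \emph{Assuming} the deferred identity $\theta_{bcd}=\nabla_b\theta_{cd}$, the section $\tilde s_b:=s_b-\nabla_b(0,0,\theta_{cd})$ has the explicit form $(\sigma_{bc}+\theta_{bc},\,\lambda_{bcd},\,0)$ and is therefore a section of $\Wedge^1\otimes F$, where $F=\Wedge^1\oplus K$ is the flat subbundle. A direct computation of $2\nabla_{[a}\tilde s_{b]}$, subtracting the curvature \eqref{refined_curvature} of $(0,0,\theta_{cd})$ from $2\nabla_{[a}s_{b]}$ and using $(\mathcal{C}h)=\mathcal{R}(\theta)$, shows that $\tilde s_b$ maps to zero in $\Wedge^2\otimes F$. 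Since $F$ is flat, the twisted de Rham complex $F\to\Wedge^1\otimes F\to\Wedge^2\otimes F$ is locally exact, so $\tilde s_b=\nabla_b t$ for some $t\in\Gamma(F)$; then $s_b=\nabla_b\bigl(t+(0,0,\theta_{cd})\bigr)$ lies in the range of the prolongation connection, and \eqref{criterion} returns $h_{bc}=\nabla_{(b}\sigma_{c)}$. As every step after \eqref{key_implication} used only the flat theory, establishing $\theta_{bcd}=\nabla_b\theta_{cd}$ is indeed all that remains.

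The step I expect to demand the most care is verifying that the corrected section $\tilde s_b$ is genuinely $\nabla$-closed in $\Wedge^1\otimes F$, and not merely free of its $C$-component. This is the one place where $\mathcal{L}h=0$ is used in full: the $C$-component of $2\nabla_{[a}\tilde s_{b]}$ vanishes automatically because $C$ is parallel and $\tilde s_b$ carries no $C$-part, whereas the $K$-component vanishes only after cancelling the $K$-part of $(\mathcal{C}h)=\mathcal{R}(\theta)$ against the middle entry of \eqref{refined_curvature}. That the single section $\theta_{cd}$ delivered by the hypothesis simultaneously settles both the $K$- and $C$-bookkeeping is exactly what makes the subtraction land in the flat subbundle, and pinpointing $\theta_{bcd}=\nabla_b\theta_{cd}$ as the sole surviving obstruction is the substance of the proposition. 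Note finally that \eqref{key_implication} only shows $\theta_{bcd}$ and $\nabla_b\theta_{cd}$ to share the same antisymmetrised derivative, so their equality is a strictly stronger assertion and must be argued separately.
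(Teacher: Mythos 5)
Your proposal is correct and takes essentially the same route as the paper, whose proof of Proposition~\ref{prop1} is exactly this chain: the criterion \eqref{criterion}, decomposition of the lift along $E=\Wedge^1\oplus K\oplus C$, extraction of \eqref{key_implication} with uniqueness via injectivity of \eqref{LCcurv}, subtraction of $\nabla_b(0,0,\theta_{cd})$ to land in $\Wedge^1\otimes F$, and local exactness of the flat complex $F\to\Wedge^1\otimes F\to\Wedge^2\otimes F$. Your closing observations—that both the $K$- and $C$-components of \eqref{refined_curvature} applied to $(0,0,\theta_{cd})$ cancel against $\mathcal{R}(\theta)$, and that \eqref{key_implication} alone only equates antisymmetrised derivatives so that $\theta_{bcd}=\nabla_b\theta_{cd}$ is genuinely the residual obstruction—match the paper's ``by construction, maps to zero'' step and its passage to Proposition~\ref{prop2}.
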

In an attempt to use this proposition, let us write out (\ref{key_implication})
more fully
$$\nabla_b\theta_{cde}-\nabla_c\theta_{bde}
=R_{bc}{}^f{}_d\theta_{ef}-R_{bc}{}^f{}_e\theta_{df}$$
and apply $\nabla_a$, firstly noting that
$$-\nabla_a\nabla_c\theta_{bde}
=-\nabla_c\nabla_a\theta_{bde}+R_{ac}{}^f{}_b\theta_{fde}
+R_{ac}{}^f{}_d\theta_{bfe}+R_{ac}{}^f{}_e\theta_{bdf},$$
to conclude that
$$\nabla_a\nabla_b\theta_{cde}-\nabla_c\nabla_a\theta_{bde}
+R_{ac}{}^f{}_b\theta_{fde}
=R_{bc}{}^f{}_d\nabla_a\theta_{ef}+R_{ac}{}^f{}_d\theta_{bef}
-R_{bc}{}^f{}_e\nabla_a\theta_{df}
-R_{ac}{}^f{}_e\theta_{bdf}.$$
Skewing this equation over $ab$ and using (\ref{key_implication}) to substitute
for $\nabla_c\nabla_{[a}\theta_{b]de}$ leads to
\begin{equation}\label{old_conclusion_by_new_route}
R_{ab}{}^f{}_dX_{cef}+R_{ca}{}^f{}_dX_{bef}+R_{bc}{}^f{}_dX_{aef}
-R_{ab}{}^f{}_eX_{cdf}-R_{ca}{}^f{}_eX_{bdf}-R_{bc}{}^f{}_eX_{adf}=0,
\end{equation}
where $X_{cef}\equiv\theta_{cef}-\nabla_c\theta_{ef}$. According to 
Proposition~\ref{prop1}, we have proved the 
following.
\begin{prop}\label{prop2}
Suppose $M$ is a Riemannian locally symmetric space. In order to show that the
complex \eqref{key_complex} is locally exact, it suffices to show that if
$X_{bcd}\in\Gamma(\Wedge^1\otimes C)$ satisfies
equation~\eqref{old_conclusion_by_new_route}, then $X_{bcd}=0$.
\end{prop}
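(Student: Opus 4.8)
\emph{Proof proposal.} The plan is to derive Proposition~\ref{prop2} from Proposition~\ref{prop1} by converting the overdetermined first-order system \eqref{key_implication} on $\theta_{bcd}$ into a single pointwise (algebraic) condition. By Proposition~\ref{prop1}, local exactness of \eqref{key_complex} follows once we know that every $\theta_{bcd}\in\Gamma(\Wedge^1\otimes C)$ satisfying \eqref{key_implication}, with its uniquely determined $\theta_{cd}\in\Gamma(C)$, obeys $\nabla_b\theta_{cd}=\theta_{bcd}$. Setting $X_{bcd}\equiv\theta_{bcd}-\nabla_b\theta_{cd}$, this is precisely the assertion $X_{bcd}=0$. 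So, assuming as in Proposition~\ref{prop2} that \eqref{old_conclusion_by_new_route} forces $X=0$, it is enough to verify that any $\theta_{bcd}$ satisfying \eqref{key_implication} produces an $X_{bcd}$ obeying \eqref{old_conclusion_by_new_route}.

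First I would differentiate \eqref{key_implication}. Applying $\nabla_a$ to the expanded form $\nabla_b\theta_{cde}-\nabla_c\theta_{bde}=R_{bc}{}^f{}_d\theta_{ef}-R_{bc}{}^f{}_e\theta_{df}$ produces second derivatives of $\theta_{bcd}$ together with first derivatives of the curvature times $\theta$. Here the local symmetry hypothesis $\nabla_aR_{bc}{}^d{}_e=0$ is essential: it kills all $(\nabla R)\,\theta$ terms, so that $\nabla_a$ only ever lands on $\theta$ and the right-hand side becomes $R_{bc}{}^f{}_d\nabla_a\theta_{ef}-R_{bc}{}^f{}_e\nabla_a\theta_{df}$. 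Next I would use the Ricci identity to rewrite $\nabla_a\nabla_c\theta_{bde}$ as $\nabla_c\nabla_a\theta_{bde}$ plus the curvature terms $R_{ac}{}^f{}_b\theta_{fde}+R_{ac}{}^f{}_d\theta_{bfe}+R_{ac}{}^f{}_e\theta_{bdf}$ coming from the three lower indices of $\theta$.

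The key manoeuvre is then to skew the resulting identity over the pair $ab$ and to substitute \eqref{key_implication} back in for the combination $\nabla_c\nabla_{[a}\theta_{b]de}$. Because \eqref{key_implication} expresses $\nabla_{[a}\theta_{b]de}$ algebraically in terms of $R$ and $\theta_{de}$, this substitution removes every surviving second-derivative term, and the remaining first-derivative terms regroup together with the curvature contractions $R\,\theta$ to yield exactly \eqref{old_conclusion_by_new_route} with $X_{cef}=\theta_{cef}-\nabla_c\theta_{ef}$. I expect the main obstacle to be purely organisational: tracking the six cyclic-in-$abc$, skew-in-$de$ curvature contractions so that the second derivatives cancel cleanly and what is left collapses into the symmetric Bianchi-type expression \eqref{old_conclusion_by_new_route}, rather than into some weaker consequence of it. Once this identity is in hand, Proposition~\ref{prop1} applies verbatim: the assumed implication that \eqref{old_conclusion_by_new_route} forces $X=0$ gives $\nabla_b\theta_{cd}=\theta_{bcd}$, and the reduction is complete. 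The genuinely hard analysis—deciding when \eqref{old_conclusion_by_new_route} really does force $X=0$, and hence where the flat$\,\times\,$Hermitian obstruction of Theorem~\ref{main_thm} enters—is deliberately deferred and is not needed for Proposition~\ref{prop2} itself.
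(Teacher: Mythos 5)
Your proposal is correct and follows essentially the same route as the paper: differentiate \eqref{key_implication}, invoke local symmetry so that $\nabla_a$ never lands on the curvature, apply the Ricci identity to commute derivatives, skew over $ab$, and substitute \eqref{key_implication} back in for $\nabla_c\nabla_{[a}\theta_{b]de}$ to obtain \eqref{old_conclusion_by_new_route} with $X_{cef}=\theta_{cef}-\nabla_c\theta_{ef}$, after which Proposition~\ref{prop1} applies verbatim. The one point you leave tacit---that $X_{bcd}$ indeed lies in $\Gamma(\Wedge^1\otimes C)$ because the Levi-Civita connection preserves $C$---is equally tacit in the paper and immediate from the orthogonal decomposition $\Wedge^2=K\oplus C$.
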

We are now in a position to prove Theorem~\ref{main_thm} in case that $M$ is
irreducible.
\section{The irreducible case}
Although (\ref{old_conclusion_by_new_route}) is a
seemingly powerful equation in $\Wedge^3\otimes\Wedge^2$, it is difficult
to use in this form and, instead, we shall use only its trace, which yields an 
equation in $\Wedge^2\otimes\Wedge^1$, namely (bearing in mind that 
$R_{abcd}\in\Gamma(K\otimes K)$ and that $C$ is orthogonal to~$K$)
\begin{equation}\label{trace}
R_b{}^dX_{ade}-R_a{}^dX_{bde}=R_{ab}{}^{cd}X_{cde}-R_{ab}{}^d{}_eX^c{}_{cd},
\end{equation}
where $R_b{}^d\equiv R_{ab}{}^{ad}$ is the Ricci tensor. 
In the irreducible case, the metric is Einstein, 
i.e.~$R_{ab}=\lambda g_{ab}$ for some constant~$\lambda$. There are three cases 
according to the sign of~$\lambda$:
\begin{itemize}
\item $\lambda>0$ and $M$ is said to be of {\em compact\/} type,
\item $\lambda<0$ and $M$ is said to be of {\em non-compact\/} type,
\item $\lambda=0$, in which case $M$ is flat (and one-dimensional).
\end{itemize}
For flat Euclidean space ${\mathbb{R}}^p$ of any dimension or, indeed, any
constant curvature metric of any signature, the homomorphism (\ref{key_hom})
vanishes. Consequently, the subbundle $K\subseteq\Wedge^2$ coincides
with~$\Wedge^2$ and $C=K^\perp$ vanishes. In this case, the criterion of
Proposition~\ref{prop2} is vacuously satisfied and we have recovered Calabi's 
result that (\ref{key_complex}) is locally exact for constant curvature 
metrics. Hence, we may suppose $\lambda\not=0$ and, without loss of 
generality, rescale the metric so that $R_{ab}=g_{ab}$ for compact type or 
$R_{ab}=-g_{ab}$ for non-compact.

The following Lemma now suffices to establish Theorem~\ref{main_thm} for
irreducible~$M$.

\begin{lemma}\label{algebraic_result_for_LTS}
Suppose $M$ is irreducible and $X_{abc}$ is a section of $\Wedge^1\otimes C$.
If $X_{abc}$ satisfies \eqref{trace} and
$R_{ab}=\pm g_{ab}$, then $X_{abc}=0$.
\end{lemma}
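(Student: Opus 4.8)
The plan is to prove Lemma~\ref{algebraic_result_for_LTS} by a purely pointwise, algebraic argument at an arbitrary point, using a symmetry reduction that shrinks the problem to a single more tractable vanishing statement. Writing $Y_{abc}\equiv X_{[ab]c}=\tfrac12(X_{abc}-X_{bac})$ for the antisymmetrisation of $X$ in its first two indices, I would first observe that, under $R_{ab}=\pm g_{ab}$, the left-hand side of \eqref{trace} is exactly $\pm2\,Y_{abe}$, so that \eqref{trace} reads $\pm2\,Y_{abe}=R_{ab}{}^{cd}X_{cde}-R_{ab}{}^d{}_eX^c{}_{cd}$. The key elementary remark is that a three-tensor which is simultaneously symmetric in its first two indices and skew in its last two must vanish: chasing the two symmetries around the three slots yields $X_{abc}=-X_{abc}$. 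Since $X_{abc}$ is skew in $bc$ by hypothesis (it is a section of $\Wedge^1\otimes C\subseteq\Wedge^1\otimes\Wedge^2$), it therefore suffices to prove that $Y=0$, equivalently that the right-hand side of \eqref{trace} vanishes identically.

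To analyse that right-hand side I would assemble the algebraic structure available at the point. Because $R_{abcd}$ is a section of $K\odot K$ and $C=K^\perp$, contracting any curvature index pair against the $C$-slot of $X$ annihilates it: $R_{ab}{}^{cd}X_{ecd}=0$ for all $a,b,e$. Moreover, after normalising and choosing an orthonormal basis $\{\xi^\alpha\}$ of $K$, the relation $R_{abcd}\in K\odot K$ takes the form $R_{abcd}=\pm\sum_\alpha\xi^\alpha_{ab}\xi^\alpha_{cd}$, so the curvature quadratic form $Q(\mu)\equiv R_{abcd}\mu^{ab}\mu^{cd}=\pm\sum_\alpha(\xi^\alpha_{ab}\mu^{ab})^2$ is definite on $K$ (positive for compact type, negative for non-compact) and vanishes on $C$; the Einstein normalisation $R_{ab}=\pm g_{ab}$ fixes the scale, and its sign matches the sign of $Q$ on $K$. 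Finally, irreducibility enters as the statement that the isotropy representation of $\mathfrak{k}$ on $\mathfrak{p}=T_pM$ is irreducible, so that the Casimir of $\mathfrak{k}$ acts on $\mathfrak{p}$ as the single scalar dictated by $\pm g_{ab}$.

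With these in hand I would contract \eqref{trace} against $X^{abe}$ (equivalently against $Y^{abe}$): the left-hand side produces $\pm2\,|Y|^2$, while the right-hand side becomes a combination of curvature quadratic forms in $X$ together with a trace term involving $X^c{}_{cd}$. Using the first Bianchi identity, the differentiated symmetry \eqref{LTS} defining a Lie triple system, and the orthogonality $R_{ab}{}^{cd}X_{ecd}=0$, I would rewrite every curvature contraction so that the surviving forms are of the definite-sign type $Q(\cdot)$ and the trace term is disposed of. Since the pointwise solution space of \eqref{trace} is a module for $\mathfrak{k}$, the conclusion can then be reached either by Schur's lemma, computing that the Casimir eigenvalue forced on any nonzero submodule of $\mathfrak{p}^*\otimes C$ is incompatible with the value $\pm1$ set by the Einstein constant, or by positivity, observing that $\pm|Y|^2$ is forced to have the sign opposite to the definite curvature form it equals. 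Either way $Y=0$, whence $X=0$ by the reduction of the first paragraph, and the compact and non-compact cases are handled uniformly.

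I expect the main obstacle to be exactly the mismatch between the skew pair of $X$ (its last two indices, lying in $C$) and the index pairs carried by the curvature in \eqref{trace}: the clean orthogonality $R_{ab}{}^{cd}X_{ecd}=0$ applies only when a full curvature pair meets the full $C$-slot, whereas the terms in \eqref{trace} couple the curvature to one index from the $\Wedge^1$ factor and one from the $C$-slot. Disentangling these requires repeated first-Bianchi rearrangements together with \eqref{LTS}, and this is where the bookkeeping is heaviest. The second, conceptual, obstacle is that the single trace equation \eqref{trace} is visibly too weak to force $X=0$ by formal manipulation alone: one genuinely needs irreducibility, beyond the Einstein property, to exclude a nonzero $\mathfrak{k}$-invariant solution, so the decisive step is the Schur/Casimir (or sharp positivity) input rather than any formal tensor identity.
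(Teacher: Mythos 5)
Your opening reduction is sound and agrees with the paper: setting $Y_{abc}\equiv X_{[ab]c}$, noting that with $R_{ab}=\pm g_{ab}$ equation \eqref{trace} becomes $\pm2Y_{abe}=R_{ab}{}^{cd}X_{cde}-R_{ab}{}^d{}_eX^c{}_{cd}$, observing that $Y=0$ forces $X=0$, and disposing of the trace term by comparing the two expressions for $Y_{ab}{}^b$ and using $R_a{}^{cdb}X_{cdb}=0$ (from $R_{abcd}\in\Gamma(K\odot K)$ and $X_{abc}\in\Gamma(\Wedge^1\otimes C)$) to get $X^c{}_{ca}=0$, reducing everything to the eigenvalue equation $R_{ab}{}^{cd}Y_{cde}=\pm2Y_{abe}$. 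The gap is in your decisive step, in two respects. First, $R_{abcd}\in\Gamma(K\odot K)$ does \emph{not} give $R_{abcd}=\pm\sum_\alpha\xi^\alpha_{ab}\xi^\alpha_{cd}$ for an orthonormal basis $\{\xi^\alpha\}$ of $K$: that would say the operator \eqref{R} restricts to $\pm$ the identity on $K$, which is false --- on ${\mathbb{CP}}_n$ normalised so that $R_{ab}=g_{ab}$, the eigenvalues on $K$ are $2/(n+1)$ on $\Wedge^{1,1}_\perp$ and $2$ on $\langle J_{ab}\rangle$. Membership in $K\odot K$ gives only real diagonalisability with eigenvectors in $K$; semidefiniteness is itself a theorem, not formal algebra. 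Second, your positivity punchline is wrong-signed: in the compact case the reduced equation reads $2Y_{abe}=R_{ab}{}^{cd}Y_{cde}$, and pairing with $Y^{abe}$ equates $2\|Y\|^2$ with a \emph{nonnegative} quadratic form, so both sides have the same sign and no contradiction arises. What is genuinely needed is the sharp quantitative bound that all eigenvalues of \eqref{R} lie in $[0,2]$, with $2$ attained only on multiples of the K\"ahler form in the Hermitian case. This is the content of the paper's Theorem~\ref{eigenvalues_of_curvature}, proved from the traced Lie-triple identity \eqref{LTS_traced} together with the Ambrose--Singer theorem --- a holonomy argument, not pointwise linear algebra --- and nothing in your sketch supplies it.

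Even granted that bound, your plan fails at the borderline, which it never engages: the eigenvalue $2$ \emph{is} attained on Hermitian irreducibles such as ${\mathbb{CP}}_n$, to which the lemma applies, so $Y_{abc}=J_{ab}\phi_c$ solves the reduced eigenvalue equation \eqref{Yeq}. Hence neither definiteness nor a Schur/Casimir mismatch can conclude $Y=0$: the $2$-eigenspace tensored with $\Wedge^1$ is a nonzero invariant solution space of the problem you propose to settle by those means, so the claim that the relevant Casimir value is incompatible with the Einstein constant cannot be correct as stated (and is asserted, not computed). The paper eliminates this residual solution using the trace information already extracted: $X^c{}_{ca}=0$ forces $Y_{ab}{}^b=0$, i.e.\ $J_{ab}\phi^b=0$, and nondegeneracy of $J_{ab}$ gives $\phi_c=0$. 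That this borderline is genuinely delicate, rather than removable by sharper formal identities, is illustrated by the products with a flat factor (e.g.\ $S^2\times S^1$), where precisely this eigenvalue-$2$ phenomenon makes the complex \eqref{key_complex} fail to be locally exact.
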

\begin{proof}
Suppose that $M$ is of compact type, i.e.~$R_{ab}=g_{ab}$. 
If we set 
\begin{equation}\label{XvY}
Y_{abc}\equiv X_{[ab]c},\quad\mbox{equivalently}\enskip 
X_{abc}=Y_{abc}+Y_{cab}+Y_{cba},\end{equation}
then (\ref{trace}) now reads
$$2Y_{abe}=R_{ab}{}^{cd}X_{cde}-R_{ab}{}^d{}_eX^c{}_{cd}$$
and tracing over $be$ gives
$$2Y_{ab}{}^b=R_a{}^{bcd}X_{cdb}-X^c{}_{ca}.$$
On the other hand, tracing $Y_{abc}=X_{[ab]c}$ gives
$$2Y_{ab}{}^b=X^b{}_{ba}.$$
It follows that
$$4X^c{}_{ca}=2R_a{}^{bcd}X_{cdb}=-R_a{}^{cdb}X_{cdb},$$
where this last equality follows from the Bianchi symmetry $R_a{}^{[bcd]}=0$.
However, recall that (\ref{LTS}) implies $R_{abcd}\in\Gamma(K\otimes K)$ whilst
$X_{abc}\in\Gamma(\Wedge^1\otimes C)$. Therefore $R_a{}^{cdb}X_{cdb}=0$. It
follows that $X^c{}_{ca}=0$ and, since
$R_{ab}{}^{cd}X_{cde}=R_{ab}{}^{cd}Y_{cde}$, equation (\ref{trace}) now yields
\begin{equation}\label{Yeq}
R_{ab}{}^{cd}Y_{cde}=2Y_{abe}.\end{equation} 
We are, therefore, led to the operator
\begin{equation}\label{R}
R:\Wedge^2\to\Wedge^2\quad\mbox{given by}\quad\omega_{ab}\longmapsto
R_{ab}{}^{cd}\omega_{cd}\end{equation} and its eigenvalues. The interchange
symmetry for $R_{abcd}$ says that $R$ is symmetric and is, therefore,
orthogonally diagonalisable. In the following section we shall show that, for
locally symmetric irreducible compact type, normalised so that $R_{ab}=g_{ab}$,
we have
$$R_{ab}{}^{cd}\omega_{cd}=\lambda\omega_{ab}\enskip\mbox{for}\enskip
\omega_{ab}\not=0\quad\Rightarrow\quad 0\leq\lambda\leq 2$$
with $\lambda=2$ only in the Hermitian case. Moreover, the $2$-eigenspace in
the Hermitian case is spanned by $J_{ab}$, the K\"ahler form. In particular,
equation (\ref{Yeq}) implies that either $Y_{abc}=0$ in the non-Hermitian case,
or $Y_{abc}=J_{ab}\phi_c$ for some $1$-form~$\phi_c$ in the Hermitian case. But
now, in the Hermitian case,
$$0=Y_{ab}{}^b=J_{ab}\phi^b$$
and $J_{ab}$ being nondegenerate implies that $\phi_c=0$. Thus, in all cases,
we conclude that $Y_{abc}=0$ and, therefore, that $X_{abc}=0$, as required.

For non-compact type we may normalise the Ricci tensor
$R_{ab}=-g_{ab}$, equation (\ref{Yeq}) is replaced by
$$R_{ab}{}^{cd}Y_{cde}=-2Y_{abe},$$
eigenvalues are in the range $-2\leq\lambda\leq0$, and, similarly, the only
available eigenvector with eigenvalue $-2$ is the K\"ahler form in the 
Hermitian case. Again, we conclude that $X_{abc}=0$, as required.
\end{proof}

\section{Eigenvalues of the curvature operator}\label{eigenvalues}
In this section we shall establish the result about the eigenvalues of the
Riemann curvature as an operator on $2$-forms that was needed in the proof of
Lemma~\ref{algebraic_result_for_LTS}.  This result may also be gleaned from
\cite[Table (39)]{G} (cf.~\cite{BG,BK,KSW}).  However, since we use a different
normalisation for the metric and have a straightforward proof available, we
present it here.
\begin{thm}\label{eigenvalues_of_curvature}
Suppose that $M$ is an irreducible Riemannian locally symmetric 
space and consider the endomorphism~\eqref{R}. If $M$ is compact type, with 
Ricci tensor normalised so that $R_{ab}=g_{ab}$, then the eigenvalues of this 
endomorphism lie in the interval $[0,2]$. Regarding the end points of this 
interval,
\begin{itemize}
\item 
$R_{ab}{}^{cd}\omega_{cd}=0\quad\Leftrightarrow\quad\omega_{ab}\in\Gamma(C)$;
\item $R_{ab}{}^{cd}\omega_{cd}=2\omega_{ab}\enskip\mbox{for}\enskip
\omega_{ab}\not=0\quad\Leftrightarrow\quad M$ is Hermitian and
$\omega_{ab}$ is a smooth multiple of the K\"ahler form, 
i.e.~$\omega_{ab}=\lambda J_{ab}$ for some smooth function~$\lambda$.
\end{itemize}
If $M$ is non-compact type, with Ricci tensor normalised so that
$R_{ab}=-g_{ab}$, then the eigenvalues of \eqref{R} lie in the interval
$[-2,0]$ and
\begin{itemize}
\item 
$R_{ab}{}^{cd}\omega_{cd}=0\quad\Leftrightarrow\quad\omega_{ab}\in\Gamma(C)$;
\item $R_{ab}{}^{cd}\omega_{cd}=-2\omega_{ab}\enskip\mbox{for}\enskip
\omega_{ab}\not=0\enskip\Leftrightarrow\enskip M$ is Hermitian and
$\omega_{ab}$ is a smooth multiple of the K\"ahler form,
i.e.~$\omega_{ab}=\lambda J_{ab}$ for some smooth function~$\lambda$.
\end{itemize}
\end{thm}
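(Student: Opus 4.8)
The plan is to read everything through the Cartan decomposition $\mathfrak g=\mathfrak k\oplus\mathfrak p$ recalled in \S\ref{generalities}: here $\mathfrak p\cong T_pM$ carries $g$, the isotropy map $\operatorname{ad}\colon\mathfrak k\to\mathfrak{so}(\mathfrak p)=\Wedge^2$, $k\mapsto\operatorname{ad}(k)\vert_{\mathfrak p}$, is injective onto $K$, and the curvature is $R_{abcd}=B([e_a,e_b],[e_c,e_d])$ for the $\operatorname{ad}$-invariant extension $B$ of $g$ to $\mathfrak g$ with $\mathfrak k\perp\mathfrak p$ (the sign making $R_{ab}=g_{ab}$ positive). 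A one-line computation from this formula shows that the operator \eqref{R} annihilates $C=K^{\perp}$ (the second index pair of $R$ lies in $K$) and that, under $\operatorname{ad}$, its restriction to $K$ is carried to $-\operatorname{Cas}_{\mathfrak p}\vert_{\mathfrak k}$, where $\operatorname{Cas}_{\mathfrak p}=\sum_i\operatorname{ad}(e_i)^2$ for a $B$-orthonormal basis $\{e_i\}$ of $\mathfrak p$. Since $B(-\operatorname{Cas}_{\mathfrak p}k,k)=\sum_i|[e_i,k]|^2=\|\operatorname{ad}(k)\vert_{\mathfrak p}\|^2\ge0$, this already yields eigenvalues $\ge0$, and, the isotropy action being faithful, shows the kernel is exactly $C$ — the first bullet in each case.

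For the upper bound I would evaluate the full Casimir $\operatorname{Cas}_{\mathfrak g}=\sum_a\operatorname{ad}(X_a)^2$ ($B$-orthonormal basis of $\mathfrak g$) first on $\mathfrak p$ and then transport it to $\mathfrak k$. The crucial observation is that for $X\in\mathfrak p$ the operator $\operatorname{ad}(X)$ is $B$-skew and interchanges the orthogonal summands $\mathfrak p$ and $\mathfrak k$; writing $A=\operatorname{ad}(X)\vert_{\mathfrak p}$ one has $\operatorname{ad}(X)\vert_{\mathfrak k}=-A^{*}$, so $\|\operatorname{ad}(X)\vert_{\mathfrak k}\|^2=\|A\|^2=\|\operatorname{ad}(X)\vert_{\mathfrak p}\|^2=R_{ab}X^aX^b=|X|^2$. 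Hence $B(-\operatorname{Cas}_{\mathfrak g}X,X)=\|A\|^2+\|A^{*}\|^2=2|X|^2$, i.e.\ $-\operatorname{Cas}_{\mathfrak g}\vert_{\mathfrak p}=2\,\mathrm{Id}$. Because the Casimir is central, $-\operatorname{Cas}_{\mathfrak g}$ commutes with every $\operatorname{ad}(X)$, so for $X,Y\in\mathfrak p$ we get $-\operatorname{Cas}_{\mathfrak g}[X,Y]=[X,-\operatorname{Cas}_{\mathfrak g}Y]=2[X,Y]$; as $[\mathfrak p,\mathfrak p]=\mathfrak k$ (effectiveness) this propagates the eigenvalue, giving $-\operatorname{Cas}_{\mathfrak g}\vert_{\mathfrak k}=2\,\mathrm{Id}$ as well. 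Splitting $\operatorname{Cas}_{\mathfrak g}=\operatorname{Cas}_{\mathfrak p}+\operatorname{Cas}_{\mathfrak k}$ then gives $-\operatorname{Cas}_{\mathfrak p}\vert_{\mathfrak k}=2\,\mathrm{Id}-\Omega_{\mathfrak k}$, where $\Omega_{\mathfrak k}=-\operatorname{Cas}_{\mathfrak k}\vert_{\mathfrak k}$ satisfies $B(\Omega_{\mathfrak k}k,k)=\|\operatorname{ad}(k)\vert_{\mathfrak k}\|^2\ge0$; so every eigenvalue lies in $[0,2]$.

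The endpoint $\lambda=2$ forces $\Omega_{\mathfrak k}k=0$, i.e.\ $[\mathfrak k,k]=0$, so the $2$-eigenspace is exactly the centre $\mathfrak z(\mathfrak k)$. For an irreducible symmetric space $\mathfrak z(\mathfrak k)$ is nonzero precisely in the Hermitian case, where it is one-dimensional and its generator acts on $\mathfrak p$ as a multiple of the complex structure; under $\operatorname{ad}$ this is exactly a multiple of the K\"ahler form — the standard structural description of Hermitian type, which I would simply quote. That is the second bullet. The non-compact case follows from the standard duality between compact and non-compact type, under which the curvature tensor, and hence the operator \eqref{R}, changes sign; this turns $[0,2]$ into $[-2,0]$, and the descriptions of the extreme eigenspaces carry over verbatim.

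I expect the block-antisymmetry step to be the real crux: it is what pins the full Casimir to the value $2$ — precisely twice the Einstein constant — and so converts the cheap positivity $\lambda\ge0$ into the sharp bound $\lambda\le2$, while explaining why the number is $2$ and not some metric-dependent quantity. Everything else is bookkeeping. A purely tensorial variant, closer to the index calculus of \S\ref{generalities}, would instead contract the differentiated curvature identity \eqref{LTS} against an eigenform of \eqref{R} and invoke $R_{ab}=g_{ab}$; I expect this to reproduce the same inequality, but to be in substance the same computation written without naming $\mathfrak g$.
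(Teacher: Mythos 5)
Your proposal is correct, and it takes a genuinely different route from the paper. You argue on the Lie algebra side: writing $\Phi\colon\mathfrak{k}\to\Wedge^2$ for the (faithful) isotropy embedding with image $K$, the operator \eqref{R} becomes $\Phi\Phi^*$, whence it vanishes on $C=K^\perp$ and is positive definite on $K$ (your first bullet), while your block-antisymmetry computation pins down $-\operatorname{Cas}_{\mathfrak{g}}\vert_{\mathfrak p}=2\operatorname{Id}$ under the normalisation $R_{ab}=g_{ab}$, propagates this to $\mathfrak k$ via centrality and $[\mathfrak p,\mathfrak p]=\mathfrak k$, and splits off $\Omega_{\mathfrak k}\geq0$ to get the sharp bound, with the $2$-eigenspace identified as $\Phi(\mathfrak z(\mathfrak k))$. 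The paper instead stays entirely tensorial: it traces the Lie-triple identity \eqref{LTS} to obtain \eqref{LTS_traced}, pairs with an eigenform, and uses the defining property of $K$ in the form $R_{be[f}{}^d\omega_{c]d}=-R_{fc[b}{}^d\omega_{e]d}$ to exhibit $(\lambda-\tfrac12\lambda^2)\|\omega_{ab}\|^2=2\|R_{fc[b}{}^d\omega_{e]d}\|^2\geq0$; the endpoints are then handled by observing that an eigenvector in $K$ with $\lambda\in\{0,2\}$ is annihilated by the curvature, hence parallel by Ambrose--Singer, hence nondegenerate by irreducibility, hence the K\"ahler form. What your route buys is conceptual transparency: it explains why the extremal constant is $2$ (the full Casimir is twice the Einstein constant) and makes the Hermitian dichotomy structural ($\lambda=2$ eigenspace $=$ centre of the isotropy algebra); the cost is imported machinery — the ad-invariant extension $B$ of $g$ (which needs Schur/irreducibility to match the Killing form with $g$ on $\mathfrak p$, and positive-definiteness on $\mathfrak k$ in the compact case, so that your orthonormal-basis and positivity arguments are legitimate), the transvection identity, and the Helgason facts that $\mathfrak z(\mathfrak k)\neq0$ exactly in the Hermitian case with one-dimensional centre acting as the complex structure. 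The paper deliberately avoids this structure theory, using only ``irreducible $\Rightarrow$ Einstein'' and replacing your quoted Hermitian fact by the self-contained Ambrose--Singer argument; conversely, your treatment of the zero eigenvalue (faithfulness gives positive-definiteness on $K$ outright) is slightly cleaner than the paper's indirect exclusion of a parallel $0$-eigenvector. Your handling of non-compact type by duality matches the paper's \emph{mutatis mutandis} in rigour, and your closing remark is exactly right: the identity $-\operatorname{Cas}_{\mathfrak{g}}\vert_{\mathfrak p}=2\operatorname{Id}$ is the invariant avatar of the traced identity \eqref{LTS_traced}, so the two proofs are the same computation in different clothing. Only small implicit steps remain on your side — that $\operatorname{Cas}_{\mathfrak g}$ is $B$-symmetric and preserves the splitting, so your quadratic-form equality polarises to an operator identity — and these are routine.
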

\begin{proof} Recall from (\ref{LTS}) that $R_{abcd}$ is a section of
$K\bigodot K\subseteq\Wedge^2\bigodot\Wedge^2$.  Therefore, as an endomorphism
(\ref{R}) of~$\Wedge^2$, it is self-adjoint and annihilates~$C\equiv K^\perp$.
This endomorphism therefore preserves the orthogonal decomposition
$\Wedge^2=K\oplus C$.  Thus, we may restrict 
$\omega_{ab}\mapsto R_{ab}{}^{cd}\omega_{cd}$ to $K\subseteq\Lambda^2$ and,
supposing that $R_{ab}=g_{ab}$, we are required to show that its eigenvalues
(necessarily real) lie in the interval $(0,2]$ and to prove the stated
consequences of having an eigenvalue equal to~$2$. Continuing to suppose that 
$R_{ab}=g_{ab}$, tracing (\ref{LTS}) over the indices
$dg$ yields 
\begin{equation}\label{LTS_traced}\textstyle R_{ab}{}_{cd}=
\frac12R_{ab}{}^{ef}R_{cdef}+R_a{}^{ef}{}_cR_{befd}-R_b{}^{ef}{}_cR_{aefd}.
\end{equation}
Therefore, if $R_{ab}{}^{cd}\omega_{cd}=\lambda\omega_{ab}$, then 
$$\textstyle\lambda\omega_{ab}=\frac12\lambda^2\omega_{ab}
+2R_a{}^{ef}{}_cR_{befd}\omega^{cd},$$
whence
$$\textstyle(\lambda-\frac12\lambda^2)\omega^{ab}\omega_{ab}
=2\omega^{ab}R_a{}^{efc}R_{bef}{}^d\omega_{cd}
=2\omega^{a[b}R_a{}^{e]fc}R_{be[f}{}^d\omega_{c]d}.$$
Recall that we are restricting to the case where $\omega_{ab}$ lies in $K$,
which, by definition, implies that
$$R_{be[f}{}^d\omega_{c]d}=-R_{fc[b}{}^d\omega_{e]d}$$
and we conclude that
$$\textstyle(\lambda-\frac12\lambda^2)\|\omega_{ab}\|^2
=(\lambda-\frac12\lambda^2)\omega^{ab}\omega_{ab}
=-2\omega^{a[b}R_a{}^{e]fc}R_{fc[b}{}^d\omega_{e]d}
=2\|R_{fc[b}{}^d\omega_{e]d}\|^2.$$
It follows that $\lambda-\frac12\lambda^2\geq 0$, in other words that
$\lambda\in[0,2]$. Furthermore, if $\lambda=0$ or $\lambda=2$,
then $R_{ab}{}^e{}_{[c}\omega_{d]e}=0$. Tracing this equation over $bc$ gives
$$\textstyle\omega_{da}=R_a{}^e\omega_{de}=R_a{}^{be}{}_d\omega_{be}=
\frac12R_{da}{}^{be}\omega_{be},\quad\mbox{(forcing $\lambda=2$)},$$
this last equality by the Bianchi symmetry in the form
$R_{a[be]d}=\frac12R_{dabe}$.  Hence, the possibility that $\lambda=0$ for
$\omega_{ab}$ a section of $K$ is eliminated.  We are left with
$\lambda\in(0,2]$, with $\lambda=2$ implying that $\omega_{ab}$ is a section of
the bundle
\begin{equation}\label{two-forms_killed_by_curvature}
\{X_{ab}\in\Wedge^2\mid R_{ab}{}^e{}_{[c}X_{d]e}=0\},
\end{equation} 
which, by local symmetry, is parallel and manifestly flat.  In this case,
therefore, this bundle admits a non-zero parallel section, a $2$-form
$\tilde{\omega}_{ab}$.  As noted just after Lemma~\ref{L-one}, this forces $M$
to be Hermitian and $\tilde\omega_{ab}$ to be a constant multiple of~$J_{ab}$,
the K\"ahler form.  We conclude the bundle
(\ref{two-forms_killed_by_curvature}) is rank one, and that $\omega_{ab}$
itself is a smooth multiple of~$J_{ab}$.


For irreducible non-compact type normalised so that $R_{ab}=-g_{ab}$, the same
argument applies {\em mutatis mutandis\/}.  Details are left to the reader.
\end{proof}

{\bf Remark.}\enskip A simple variation on this argument proves the classical
result that, if a locally symmetric Riemannian metric is Ricci-flat, then it is
flat.  Specifically, tracing (\ref{LTS}) over the indices $dg$ in this case
yields
$$\textstyle 0=
\frac12R_{ab}{}^{ef}R_{cdef}+R_a{}^{ef}{}_cR_{befd}-R_b{}^{ef}{}_cR_{aefd}.$$
Now, if $\omega_{ab}$ is a section of $K$, it follows that 
$$\textstyle 0=\frac12\|R_{ab}{}^{cd}\omega_{cd}\|^2
+2\|R_{fc[b}{}^d\omega_{e]d}\|^2$$
and, therefore, that $R_{ab}{}^{cd}\omega_{cd}=0$.  In particular, since
$R_{abcd}$ is a section of $K\otimes K$, we conclude that
$\|R_{abcd}\|^2=R_{ab}{}^{cd}R_{cd}{}^{ab}=0$ so $R_{abcd}=0$, as required.

\section{Products}\label{products}
We may now modify our proof of Lemma~\ref{algebraic_result_for_LTS}, and hence
of Theorem~\ref{main_thm} in the irreducible case, so that it applies to a
product
$$M=M_1\times M_2\times\cdots\times M_k,$$
of irreducible Riemannian locally symmetric spaces, none of which is flat.
Since the Ricci curvature of such a product is block diagonal, with each block
being the Ricci curvature of an individual factor, a simple way of saying that
there are no flat factors is to say that the Ricci tensor $R_{ab}$ of the whole
is nondegenerate. Thus, according to Proposition~\ref{prop2}, in order to
prove Theorem~\ref{main_thm} in this case, it suffices to establish the
following.

\begin{lemma}\label{really_good_lemma}
Suppose $X_{abc}$ is a section of $\Wedge^1\otimes C$ and satisfies
\eqref{trace}. If $R_{ab}$ is nondegenerate, then $X_{abc}=0$.
\end{lemma}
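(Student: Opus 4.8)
The plan is to deduce this from the irreducible case, Lemma~\ref{algebraic_result_for_LTS}, by exploiting the de~Rham splitting $TM=T_1\oplus\cdots\oplus T_k$ (preserved by $\nabla_a$) into the tangent bundles of the irreducible factors. Three structural facts drive everything. First, the curvature tensor $R_{abcd}$ is nonzero only when all four indices lie in a single factor, so the Ricci tensor is block-diagonal with $R_{ab}=\lambda_i g_{ab}$ on factor $i$, and the hypothesis that $R_{ab}$ is nondegenerate says exactly that every $\lambda_i\neq 0$. Second, the curvature operator \eqref{R} is correspondingly block-diagonal: on $\Wedge^2T_i^*$ it is the curvature operator of $M_i$ (with extreme eigenvalue $2\lambda_i$, attained only by a K\"ahler form, by Theorem~\ref{eigenvalues_of_curvature}), while it annihilates every \emph{mixed} $2$-form (one index in $T_i^*$, another in $T_j^*$ with $i\neq j$). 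Hence $C=\bigoplus_iC_i\oplus(\text{mixed }2\text{-forms})$ and $K=\bigoplus_iK_i$. I would then classify the components of $X_{abc}$ according to which factors its three indices occupy, remembering that the $2$-form slot $(bc)$ must land in $C$.

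First I would extract the \emph{off-diagonal} content of \eqref{trace}: when $a$ and $b$ lie in different factors the entire right-hand side vanishes (the curvature terms are supported in a single factor), leaving the clean relation $\lambda_{j}X_{abe}=\lambda_{i}X_{bae}$, where $a$ lies in factor $i$ and $b$ in factor $j$. For a component with all three indices in distinct factors, applying this relation to each of the three pairs and combining the three identities, together with the antisymmetry $X_{abc}=-X_{acb}$, produces a relation of the form $2(\lambda_j\lambda_\ell/\lambda_i)X_{abe}=0$; since the $\lambda$'s are nonzero, every such component vanishes.

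Next I would dispose of the components supported on exactly two factors $i,j$. The same off-diagonal relation ties the component whose $2$-form slot is mixed across $i$ and $j$ to the component whose $2$-form slot is a pure factor-$i$ form, and in particular forces the former to be antisymmetric in its two factor-$i$ indices. Feeding this into the \emph{diagonal} case of \eqref{trace}, with $a,b$ in factor $i$ and the third index in factor $j$, the $X^c{}_{cd}$ term drops out (its curvature coefficient is supported in factor~$i$) and I obtain $R_{ab}{}^{cd}X_{cde}=2\lambda_iX_{abe}$. Thus, for each fixed value of the factor-$j$ index, the factor-$i$ $2$-form $X_{\,\cdot\,\cdot\,e}$ is an extreme eigenvector of the curvature operator of $M_i$, hence a multiple of the K\"ahler form of $M_i$ by Theorem~\ref{eigenvalues_of_curvature}, and so lies in $K_i$. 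But the off-diagonal relation identifies this same object, up to a nonzero factor, with a component of $X$ whose $2$-form slot lies in $C_i=K_i^{\perp}$; as $C_i\cap K_i=0$, it must vanish.

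With all mixed components gone, the trace $X^c{}_{cd}$ no longer couples distinct factors, so the surviving \emph{pure} components (all three indices in one factor $i$) satisfy precisely equation \eqref{trace} internal to the irreducible space $M_i$. Lemma~\ref{algebraic_result_for_LTS} then forces them to vanish, and with Proposition~\ref{prop2} this completes the proof. I expect the main obstacle to be exactly this bookkeeping of the coupling between factors---in particular verifying that $X^c{}_{cd}$ and the curvature terms split cleanly once the mixed pieces are known to vanish---together with the K\"ahler edge case in the two-factor step, where the nondegeneracy of $J_{ab}$ (and of $R_{ab}$, consumed in the off-diagonal relations) is essential.
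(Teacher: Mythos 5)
Your proposal is correct, but it takes a genuinely different route from the paper's. The paper never decomposes $X_{abc}$ along the de~Rham splitting: it sets $Y_{abc}\equiv R_{[a}{}^dX_{b]cd}$, extracts $X^c{}_{cd}=0$ from the traces of \eqref{trace} just as in the irreducible case, and then rewrites the surviving equation $2Y_{abe}=R_{ab}{}^{cd}X_{cde}$ as the single eigenvalue problem $S_{ab}{}^{cd}Y_{cde}=2Y_{abe}$ for the tensor $S_{abcd}\equiv R_{abc}{}^eS_{de}$ built from the inverse $S^{ab}$ of the Ricci tensor (its Riemann symmetries coming from the trace of \eqref{LTS}). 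Because $S_{ab}{}^{cd}$ is insensitive to constant rescalings of the metric, Theorem~\ref{eigenvalues_of_curvature} applies factor-by-factor with no normalisation bookkeeping, and $Y_{abc}=0$, hence $X_{abc}=0$, follows from $Y_{ab}{}^b=0$. Your componentwise route---the off-diagonal instances of \eqref{trace} giving $\lambda_jX_{abe}=\lambda_iX_{bae}$, the signed permutation chase killing three-factor components, the clash between $K_i$ and $C_i=K_i^\perp$ killing two-factor components, and Lemma~\ref{algebraic_result_for_LTS} applied fibrewise to the pure components---is more laborious but buys two things. First, it is elementary: the inverse-Ricci device is replaced by rescaling each factor to $R_{ab}=\pm g_{ab}$, which is legitimate because \eqref{trace} is homogeneous under constant rescalings and Lemma~\ref{algebraic_result_for_LTS} is pointwise algebraic. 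Second, it makes fully explicit the Hermitian cross-term issue that the paper compresses into one sentence: in the paper's proof the $2$-eigenspace of $S$ is spanned by the K\"ahler forms $J^{(i)}$ of the Hermitian factors, so $Y_{abe}=\sum_iJ^{(i)}_{ab}\phi^{(i)}_e$, and the condition $Y_{ab}{}^b=0$ directly kills only the factor-$i$ part of each $\phi^{(i)}$; the components of $\phi^{(i)}_e$ with $e$ in another factor are disposed of by precisely the kind of mixed-component analysis your two-factor step performs, so your argument is, if anything, more careful on this point. Two small items you should write out when completing the proof: (i) your structural claim $K=\bigoplus_iK_i$ (hence that mixed $2$-forms lie in $C$) does not follow formally from block-diagonality of \eqref{R}, since $K$ is by definition the kernel of \eqref{key_hom}; but it is quick---if $\mu\in K$, then for $a,b,c$ in factor $i$ and $d$ in factor $j\neq i$ the defining equation collapses to $R_{ab}{}^e{}_c\mu_{de}=0$, and tracing over $a,c$ gives $\lambda_i\mu_{db}=0$, killing the mixed part since $\lambda_i\neq0$; and (ii) as you anticipate, once the mixed components vanish, both $X^c{}_{cd}$ and $R_{ab}{}^{cd}X_{cde}$ see only the pure factor-$i$ component, so \eqref{trace} literally restricts to each factor and the irreducible lemma applies. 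With these glosses your proof is complete.
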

\begin{proof}
If we set $Y_{abc}\equiv R_{[a}{}^dX_{b]cd}$, then (\ref{trace}) reads
\begin{equation}\label{neuetrace}
2Y_{abe}=R_{ab}{}^{cd}X_{cde}-R_{ab}{}^d{}_eX^c{}_{cd}.\end{equation}
Notice that
$Y_{abc}\equiv R_{[a}{}^dX_{b]cd}\Rightarrow 
2Y_{ab}{}^b=R_a{}^dX^c{}_{cd}$ whereas (\ref{neuetrace}) implies
$$2Y_{ab}{}^b=R_a{}^{bcd}X_{cdb}-R_a{}^dX^c{}_{cd}.$$
As in our proof of Lemma~\ref{algebraic_result_for_LTS}, it follows that 
$R_a{}^dX^c{}_{cd}=0$ and, therefore, since $R_{ab}$ is nonsingular, that 
$X^c{}_{cd}=0$. Equation~(\ref{neuetrace}) now reads
\begin{equation}\label{twisted_eigenvalue_problem}
2Y_{abe}=R_{ab}{}^{cd}X_{cde}.\end{equation}
Let us write $S_{ab}$ for the inverse of $R_{ab}$ and define
$$S_{abcd}\equiv R_{abc}{}^eS_{de}.$$
Tracing (\ref{LTS}) over $dg$ shows that $S_{abcd}$ satisfies Riemann tensor
symmetries
$$S_{abcd}=S_{[ab][cd]}\quad\mbox{and}\quad S_{[abc]d}=0$$
and (\ref{twisted_eigenvalue_problem}) becomes
\begin{equation}\label{eigenvalue2} S_{ab}{}^{cd}Y_{cde}=2Y_{abe}
\end{equation}
for the symmetric endomorphism $S_{ab}{}^{cd}$ of $\Wedge^2$. An advantage
of~$S_{ab}{}^{cd}$ over $R_{ab}{}^{cd}$, however, is that this tensor does not
see a constant rescaling of the metric. In fact, all of the following
$$\nabla_a,\quad R_{ab}{}^c{}_d,\quad R_{ab},\quad S^{ab},\quad\mbox{and}\quad
S_{ab}{}^{cd}=R_{ab}{}^c{}_eS^{de}$$
are preserved under $g_{ab}\mapsto\mbox{constant}\times g_{ab}$. Consequently,
for products without flat factors, Theorem~\ref{eigenvalues_of_curvature} now
implies, factor-by-factor, that all eigenvalues of
$$S:\Wedge^2\to\Wedge^2\quad\mbox{given by}\quad
\omega_{ab}\longmapsto S_{ab}{}^{cd}\omega_{cd}$$
are real and lie in the range $[0,2]$. Furthermore, eigenvalue $2$ is only
attained on Hermitian factors, in which case $\omega_{ab}$ must be a multiple of
the K\"ahler form on each such factor. {From} (\ref{eigenvalue2}) and
$Y_{ab}{}^b=\frac12R_a{}^dX^c{}_{cd}=0$ it follows that $Y_{abc}=0$ and hence
that $X_{abc}=0$. \end{proof}

The prolongation connection (\ref{prolongation_connection}) not only controls
the range of the Killing operator, as in~(\ref{criterion}), but also its 
kernel, as follows.
\begin{prop}\label{prolongation_isomorphism} There is an isomorphism
$$\left\{\left[\!\begin{array}{c}\sigma_b\\ 
\mu_{bc}\end{array}\!\right]\in\Gamma(E)
\,\;\raisebox{-9pt}{\rule{.5pt}{25pt}}\;\,
\nabla_a\!\left[\!\begin{array}{c}\sigma_b\\ 
\mu_{bc}\end{array}\!\right]=0\right\}
\stackrel{\simeq\enskip\;}{\longrightarrow}
\{\sigma_b\in\Gamma(\Wedge^1)\mid\nabla_{(a}\sigma_{b)}=0\}.$$
\end{prop}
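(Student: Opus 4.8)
The plan is to write down the only reasonable candidate for the map and check that it is a linear bijection, the single substantive input being the prolongation identity already recorded in \eqref{criterion}. The natural map is projection onto the first summand of $E$,
$$\left[\!\begin{array}{c}\sigma_b\\ \mu_{bc}\end{array}\!\right]\longmapsto\sigma_b,$$
and my first task is to verify that it carries parallel sections to Killing fields. If $[\sigma_b;\mu_{bc}]$ is annihilated by the prolongation connection \eqref{prolongation_connection}, then vanishing of its first component gives $\nabla_b\sigma_c=\mu_{bc}$; since $\mu_{bc}=\mu_{[bc]}$ is skew, this forces $\nabla_{(b}\sigma_{c)}=0$, so $\sigma_b$ lies in the target set. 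The same relation yields injectivity at once: if $\sigma_b=0$ then $\mu_{bc}=\nabla_b\sigma_c=0$, so the only parallel section projecting to $0$ is the zero section.

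For surjectivity I would exploit \eqref{criterion} by specialising it to $h_{bc}=0$. Given a Killing field $\sigma_b$, we have $\nabla_{(b}\sigma_{c)}=0$, so the condition $h_{bc}=\nabla_{(b}\sigma_{c)}$ on the left of \eqref{criterion} holds with $h_{bc}=0$. The equivalence then produces a skew tensor $\mu_{bc}$ for which the right-hand column of \eqref{criterion} equals the left-hand column, and with $h_{bc}=0$ the latter vanishes identically. The first row gives $\nabla_b\sigma_c-\mu_{bc}=0$, forcing $\mu_{bc}=\nabla_b\sigma_c$, while the second row gives $\nabla_b\mu_{cd}-R_{cd}{}^e{}_b\sigma_e=0$. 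Comparing with \eqref{prolongation_connection}, these two equations are precisely the statement that $[\sigma_b;\nabla_b\sigma_c]$ is parallel; as it projects to $\sigma_b$, surjectivity follows. Linearity of the map is evident, so these three checks establish the isomorphism.

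The hard part, were one to unwind it, is not in the proposition itself but in \eqref{criterion}: its second row encodes the classical second-order prolongation $\nabla_b\nabla_c\sigma_d=R_{cd}{}^e{}_b\sigma_e$ of the Killing equation, obtained by differentiating $\nabla_{(b}\sigma_{c)}=0$, cyclically permuting the three free indices, and reducing with the Ricci commutator and the first Bianchi identity. Since \eqref{criterion} is already in hand, the proof of Proposition~\ref{prolongation_isomorphism} reduces to the bookkeeping above, and I expect no genuine obstacle to remain.
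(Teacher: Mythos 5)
Your proposal is correct and takes essentially the same route as the paper: both use the projection $\left[\sigma_b;\mu_{bc}\right]\mapsto\sigma_b$, get injectivity from the first row forcing $\mu_{bc}=\nabla_b\sigma_c$, and reduce surjectivity to the second-order prolongation identity $\nabla_a\mu_{bc}=R_{bc}{}^d{}_a\sigma_d$ for a Killing field. The only cosmetic difference is that you obtain this identity by specialising \eqref{criterion} to $h_{bc}=0$, whereas the paper re-derives it directly from $\nabla_{[a}\mu_{bc]}=\nabla_{[a}\nabla_b\sigma_{c]}=0$ — the same computation you correctly identify as the hidden content of \eqref{criterion}.
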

\begin{proof} Clearly, 
$$\nabla_{(a}\sigma_{b)}=0\enskip\iff\enskip
\nabla_a\sigma_b=\mu_{ab}\,,\enskip\mbox{for some}\enskip
\mu_{ab}\in\Gamma(\Wedge^2).$$
Hence, it suffices to show that, if $\nabla_a\sigma_b=\mu_{ab}$, then 
$\nabla_a\mu_{bc}=R_{bc}{}^d{}_a\sigma_d$. To see this, observe that 
$\nabla_{[a}\mu_{bc]}=\nabla_{[a}\nabla_b\sigma_{c]}=0$ so
$$\nabla_a\mu_{bc}=\nabla_c\mu_{ba}-\nabla_b\mu_{ca}
=\nabla_c\nabla_b\sigma_a-\nabla_b\nabla_c\sigma_a=R_{bc}{}^d{}_a\sigma_d,$$
as required.\end{proof} 
\noindent This proposition leads us to consider just the second line of the
prolongation connection, 
$$\nabla_a\mu_{bc}=R_{bc}{}^d{}_a\sigma_d,$$
noting that if $R_{ab}$ is non-singular then $\nabla^c\mu_{bc}=R_b{}^d\sigma_d$
shows that $\sigma_b$ is determined by $\mu_{bc}$. We ask if $\sigma_b$ is
then necessarily a Killing field. 
\begin{lemma}\label{backward_prolongation}
Suppose $M$ is a Riemannian locally symmetric space with neither 
Hermitian nor flat factors. If $\mu_{cd}$ is $2$-form on $M$ so that
$$\nabla_b\mu_{cd}=R_{cd}{}^e{}_b\sigma_e,\quad\mbox{for some (uniquely 
determined) $1$-form $\sigma_c$ on $M$},$$
then $\nabla_b\sigma_c=\mu_{bc}$.
\end{lemma}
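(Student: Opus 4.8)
The goal is to show that the tensor $T_{bc}\equiv\nabla_b\sigma_c-\mu_{bc}$ vanishes, since this is precisely the assertion $\nabla_b\sigma_c=\mu_{bc}$. The plan has three stages. First I would show that $\mu_{bc}$ actually takes values in the subbundle $K$. Decomposing $\mu_{bc}=\kappa_{bc}+\gamma_{bc}$ according to $\Wedge^2=K\oplus C$, and using that $\nabla_a$ preserves this splitting while the right-hand side $R_{cd}{}^e{}_b\sigma_e$ of the hypothesis lies in $\Gamma(K)$ (as $R_{abcd}\in\Gamma(K\odot K)$), the $C$-component of the hypothesis reads $\nabla_b\gamma_{cd}=0$. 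A parallel $2$-form is annihilated by curvature, $R_{ab}{}^e{}_{[c}\gamma_{d]e}=0$ by the Ricci identity, so $\mathcal R(\gamma)=0$; but $\mathcal R$ is injective on $C$, whence $\gamma_{bc}=0$ and $\mu_{bc}\in\Gamma(K)$. Thus $\left[\begin{array}{c}\sigma_c\\ \mu_{cd}\end{array}\right]$ is a section of the flat subbundle $F=\Wedge^1\oplus K$.

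Second, I would exploit the flatness of $F$. By hypothesis the $K$-component of the prolongation derivative of $\left[\begin{array}{c}\sigma_c\\ \mu_{cd}\end{array}\right]$ vanishes, so this derivative equals $\left[\begin{array}{c}T_{bc}\\ 0\end{array}\right]$. Applying the prolongation connection once more and skewing over the two derivative indices, the vanishing curvature of $F$ forces the two identities $\nabla_{[a}T_{b]c}=0$ (which also follows at once from the first Bianchi identity) together with the purely algebraic relation $R_{cd}{}^e{}_{[a}T_{b]e}=0$.

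Third -- and this is the crux -- I would deduce $T_{bc}=0$ from these two relations, in the spirit of Lemmas \ref{algebraic_result_for_LTS} and \ref{really_good_lemma}. Tracing $R_{cd}{}^e{}_{[a}T_{b]e}=0$ over one curvature index against a free index, and using that the factors are Einstein (no flat factors meaning $R_{ab}$ nondegenerate), converts the relation into an eigenvalue equation for the curvature operator \eqref{R} applied to the skew part $T_{[bc]}$; for products this is handled by the scale-invariant operator $S_{ab}{}^{cd}$ of Lemma \ref{really_good_lemma}. Theorem \ref{eigenvalues_of_curvature} then confines the eigenvalues to $[0,2]$ (respectively $[-2,0]$), and the absence of Hermitian factors removes the extremal value $\pm2$, whose only eigenvectors are K\"ahler forms, killing $T_{[bc]}$. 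Feeding $T_{[bc]}=0$ back into the algebraic and differential relations leaves the symmetric part $T_{(bc)}=\nabla_{(b}\sigma_{c)}$, which I would show vanishes by the same eigenvalue mechanism, so that $\sigma_c$ is a Killing field and $T_{bc}=0$.

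I expect this third stage to be the main obstacle. Unlike the forward direction, $T_{bc}$ is a full $2$-tensor rather than a section of $\Wedge^1\otimes C$, so the trace must be arranged to control its symmetric and skew parts separately, and one must verify that the eigenvalues excluded by Theorem \ref{eigenvalues_of_curvature} correspond \emph{exactly} to the Hermitian and flat factors excluded in the hypothesis. A conceptually cleaner route for the final step, once $\sigma_c$ is known to be Killing, is to observe that $\mu_{bc}-\nabla_{[b}\sigma_{c]}$ is then a parallel $2$-form; on a product of non-flat, non-Hermitian irreducibles the de Rham decomposition admits no nonzero parallel $2$-form (diagonal blocks would be K\"ahler forms on irreducible factors, while off-diagonal blocks would be parallel homomorphisms between factors whose holonomy groups act independently and irreducibly), so it must vanish, giving $\nabla_b\sigma_c=\mu_{bc}$.
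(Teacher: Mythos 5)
Your first two stages are correct and are, in effect, a slightly tidier packaging of the paper's opening moves: you first show $\mu_{bc}\in\Gamma(K)$ (parallelism of the $C$-component plus injectivity of ${\mathcal{R}}$ on $C$), so that the pair $[\sigma;\mu]$ is a section of the flat bundle $F$, and then read off from the vanishing curvature the two relations $\nabla_{[a}T_{b]c}=0$ and $R_{cd}{}^e{}_{[a}T_{b]e}=0$. Your $T_{bc}$ is the paper's $\phi_{bc}$, and the second relation is exactly the paper's \eqref{R_kills_phi}, which the paper reaches instead by a type argument ($\Wedge^2\otimes K$ versus $K\otimes C$) without first locating $\mu$ in $K$; the two derivations are equivalent and yours is arguably cleaner. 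Your elimination of the skew part also coincides with the paper: tracing \eqref{R_kills_phi} and applying the inverse $S^{ab}$ of the Ricci tensor gives $T_{[af]}=\frac12S_{af}{}^{be}T_{be}$, an eigenvalue-$2$ equation for $S_{ab}{}^{cd}$ on $\Wedge^2$, excluded factor-by-factor by Theorem~\ref{eigenvalues_of_curvature} in the absence of Hermitian factors, as in Lemma~\ref{really_good_lemma}.

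The genuine gap is your third stage's claim that the symmetric part then dies ``by the same eigenvalue mechanism.'' It does not. The symmetric part of the traced relation reads $T_{(af)}=S_f{}^{be}{}_aT_{(be)}$, an eigenvalue-$1$ equation for the action of curvature on \emph{symmetric} $2$-tensors. This is a different operator from \eqref{R}, and Theorem~\ref{eigenvalues_of_curvature} says nothing about its spectrum: neither the absence of flat factors nor the absence of Hermitian factors has any evident bearing on whether eigenvalue $1$ occurs there, so no argument using only the trace of \eqref{R_kills_phi} together with the $[0,2)$ bound on $\Wedge^2$ can close this case. Tellingly, the paper's own treatment of the symmetric part is of a different character and uses the full untraced equation \eqref{R_kills_phi} twice more: one forms $\psi_{ab}=\phi_a{}^c\phi_{bc}$, shows that the curvature annihilates $\psi_{ab}$, invokes the Ambrose--Singer theorem to conclude $\nabla_a\psi_{bc}=0$, reduces to an irreducible factor where $\psi_{ab}=Ag_{ab}$, and, assuming $A\neq0$ after rescaling, combines \eqref{LTS_traced} with \eqref{R_kills_phi} to show that the operator \eqref{R} would have eigenvalue sum $n$ and eigenvalue-square sum $2n$ while all eigenvalues lie in $[0,2)$ --- a contradiction. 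Your fallback via parallel $2$-forms does not repair the gap: it presupposes that $\sigma_c$ is Killing, which is precisely the unestablished statement $T_{(bc)}=0$ (and once both parts of $T_{bc}$ vanish, no further argument is needed anyway); the holonomy argument itself is sound, but it enters only after the hard step it was meant to replace.
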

\begin{proof} 
In terms of the prolongation connection, we are given that
$$\nabla_b\!\left[\!\begin{array}{c}\sigma_c\\ 
\mu_{cd}\end{array}\!\right]=
\left[\!\begin{array}{c}\phi_{bc}\\ 
0\end{array}\!\right],\quad\mbox{for some tensor}\enskip\phi_{bc},$$
and from its curvature (\ref{curvature}) we deduce that
$$\left[\!\begin{array}{c}\nabla_{[a}\phi_{b]c}\\
-R_{cd}{}^e{}_{[a}\phi_{b]e}\end{array}\!\right]
=\left[\!\begin{array}{c}0\\ 
R_{ab}{}^e{}_{[c}\mu_{d]e}+R_{cd}{}^e{}_{[a}\mu_{b]e}
\end{array}\!\right]$$
In particular, we see that $R_{cd}{}^e{}_{[a}\phi_{b]e}$ is in the range of 
${\mathcal{R}}:\Wedge^2\to\Wedge^2\otimes\Wedge^2$ and, therefore, in the 
range of ${\mathcal{R}}:C\to\Wedge^2\otimes\Wedge^2$. Hence,
$$R_{cd}{}^e{}_{[a}\phi_{b]e}-R_{cd}{}^e{}_{[a}\omega_{b]e}
=R_{ab}{}^e{}_{[c}\omega_{d]e},\quad\mbox{for some}\enskip
\omega_{ab}\in\Gamma(C).$$
However, the tensor on the right hand side of this equation lies in 
$K\otimes C$, whereas the tensor on the left lies in~$\Wedge^2\otimes K$. 
Therefore, both sides vanish and, in particular, we deduce that
\begin{equation}\label{R_kills_phi}
R_{cd}{}^e{}_{[a}\phi_{b]e}=0.\end{equation}
Tracing this equation over $bc$ gives
\begin{equation}\label{tracing_this_equation}
R_d{}^e\phi_{ae}=R_d{}^{be}{}_a\phi_{be}.\end{equation} 
Now recall, as in our proof of Lemma~\ref{really_good_lemma}, that if we write
$S^{ab}$ for the inverse of~$R_{ab}$, then $S_{abcd}\equiv R_{abc}{}^eS_{de}$
satisfies Riemann tensor symmetries. Thus, applying $S_f{}^d$ to
(\ref{tracing_this_equation}) gives
$$\phi_{af}=S_f{}^dR_d{}^{be}{}_a\phi_{be}=S_f{}^{be}{}_a\phi_{be},$$
which decomposes into symmetric and skew parts
$$\textstyle\phi_{(af)}=S_f{}^{be}{}_a\phi_{(be)}\quad\mbox{and}\quad
\phi_{[af]}=S_{[f}{}^{be}{}_{a]}\phi_{be}=\frac12S_{af}{}^{be}\phi_{be},$$
this final equality from the Bianchi symmetry in the form
$S_{[f}{}^{be}{}_{a]}=\frac12S_{af}{}^{be}$.  However, as observed in the proof
of Lemma~\ref{really_good_lemma}, since $M$ has no Hermitian factors, the
eigenvalues of $S_{ab}{}^{cd}$ lie in the interval $[0,2)$.  It follows that
$\phi_{ab}$ is symmetric.  Let $\psi_{ab}\equiv \phi_a{}^c\phi_{bc}$.  Since
$\phi_{ab}$ is symmetric so is $\psi_{ab}$ and, from (\ref{R_kills_phi}), we
find that
\begin{equation}\label{killed_by_R}
R_{ab}{}^e{}_{(c}\psi_{d)e}=0.\end{equation}
When the Ricci tensor is nondegenerate, a $1$-form annihilated by the curvature 
necessarily vanishes:
\begin{equation}\label{hit_and_skew}
R_{ab}{}^d{}_c\theta_d=0 \enskip\Rightarrow\enskip
R_a{}^d\theta_d=0\enskip\Rightarrow\enskip \theta_a=0.\end{equation}
This implies that a symmetric $2$-form on a Riemannian product with
nondegenerate Ricci tensor and satisfying (\ref{killed_by_R}) can have no
`cross terms,' i.e.~must be block diagonal.  
Therefore, without loss of generality, we may suppose for the rest of this
proof that $M$ is irreducible and that $R_{ab}=\pm g_{ab}$, as usual.  Having
done this, equation (\ref{killed_by_R}) says that the natural action of
curvature annihilates the symmetric form $\psi_{de}$ and since $R_{ab}{}^e{}_c$
is covariantly constant the same is true for~$\nabla_a\psi_{de}$.  As in the
proof of Theorem~\ref{eigenvalues_of_curvature}, now using Lemma~\ref{L-one},
this implies that $\psi_{ab}=\lambda g_{ab}$ for some smooth
function~$\lambda\geq 0$ and it remains to show that $\lambda\equiv 0$.  
Let us record our conclusion so far
\begin{equation}\label{no_loss}
\phi_a{}^c\phi_{bc}=\lambda g_{ab}\end{equation}
and now deal with the case $R_{ab}=g_{ab}$. 
Equation (\ref{tracing_this_equation}) becomes
$R_{abcd}\phi^{bc}=\phi_{ad}$ and (\ref{LTS_traced}) therefore yields
\begin{equation}\label{yield}4\phi_{ad}\phi^{ad}=
\phi^{ad}\phi^{bc}R_{ab}{}^{ef}R_{cdef}
+2\phi^{ad}\phi^{bc}R_a{}^{ef}{}_cR_{befd}.\end{equation}
But (\ref{R_kills_phi}) also implies, with~(\ref{no_loss}), that
$$\phi^{ad}\phi^{bc}R_{ab}{}^{ef}R_{cdef}
=\phi^{ad}\phi^b{}_aR{}^c{}_b{}^{ef}R_{cdef}
=\lambda g^{bd}R{}^c{}_b{}^{ef}R_{cdef}
=\lambda\|R_{abcd}\|^2.$$
We also know that $\phi_{ad}\phi^{ad}=\lambda\delta_d{}^d=\lambda n$, where $n$
is the dimension of $M$, so (\ref{yield}) becomes
$$4\lambda n=\lambda\|R_{abcd}\|^2
+2\phi^{ad}\phi^{bc}R_a{}^{ef}{}_cR_{befd}.$$
To sort out the last term, again we use (\ref{R_kills_phi}) to 
conclude that 
$$0=R^{cdf}{}_{[a}\phi_{b]f}R^{abe}{}_{[c}\phi_{d]e}
=R^{cdf}{}_a\phi_{bf}R^{abe}{}_c\phi_{de}
=\phi^{ad}\phi^{bc}R_a{}^{ef}{}_cR_{bfed}$$
and, therefore, 
$$2\phi^{ad}\phi^{bc}R_a{}^{ef}{}_cR_{befd}
=4\phi^{ad}\phi^{bc}R_a{}^{ef}{}_cR_{b[ef]d}
=2\phi^{ad}\phi^{bc}R_a{}^{ef}{}_cR_{dbef}
=\phi^{ad}\phi^{bc}R_{ca}{}^{ef}R_{dbef},$$
which we have already found to be $\lambda\|R_{abcd}\|^2$.  We conclude that
$4\lambda n=2\lambda\|R_{abcd}\|^2$ and hence, unless $\lambda\equiv 0$, that
$\|R_{abcd}\|^2=2n$.  We also know that $R_{ab}{}^{ab}=\delta_b{}^b=n$.  In
summary, for the endomorphism $R_{ab}{}^{cd}$ of~$\Wedge^2$, we have found that
\begin{itemize}
\item all eigenvalues are in the range $[0,2)$;
\item the sum of the eigenvalues is $n$;
\item the sum of the squares of the eigenvalues is $2n$.
\end{itemize}
This is a contradiction. The case $R_{ab}=-g_{ab}$ follows in a similar
fashion.
\end{proof}
To complete the proof Theorem~\ref{main_thm}, it remains to consider Riemannian
locally symmetric products of the form
\begin{equation}\label{final_case}M\times{\mathbb{R}}^p,\end{equation}
where $M$ has no flat factors and ${\mathbb{R}}^p$ is equipped with its
standard metric.  We need to show that if $M$ has no Hermitian factors, then
(\ref{key_complex}) is locally exact but, if $M$ has a Hermitian factor,
then (\ref{key_complex}) fails to be locally exact.

To approach these final cases, let us simply write out the complex
(\ref{key_complex}) on a Riemannian product $M\times{\mathbb{R}}^p$.  For this
purpose we may borrow some notation from complex geometry and write
$$\Wedge^1=\Wedge^{1,0}\oplus\Wedge^{0,1},$$
where $\Wedge^{1,0}$ denotes the pull-back to $M\times{\mathbb{R}}^p$ of the
cotangent bundle on $M$ and $\Wedge^{0,1}$ denotes the pull-back of the
cotangent bundle on~${\mathbb{R}}^p$. We obtain an induced splitting of
symmetric forms on $M\times{\mathbb{R}}^p$ according to `type:'
\begin{equation}\label{symmetric_two-forms_according_to_type}
\textstyle\bigodot^2\!\Wedge^1
=\bigodot^2\!\Wedge^{1,0}\oplus\Wedge^{1,1}\oplus\bigodot^2\!\Wedge^{0,1},
\quad\mbox{where}\enskip \Wedge^{1,1}=\Wedge^{1,0}\otimes\Wedge^{0,1}.
\end{equation}
If we also also use the `barred and unbarred' indices from complex 
geometry, the Killing operator becomes
\begin{equation}\label{Killing_on_product_manifold}
\left[\!\begin{array}{c} X_b\\ \xi_{\bar{b}}\end{array}\!\right]
\stackrel{\mathcal{K}}{\longmapsto}\left[\!\begin{array}{c}
\nabla_{(a}X_{b)}\\ \nabla_a\xi_{\bar{a}}+\partial_{\bar{a}}X_a\\
\partial_{(\bar{a}}\xi_{\bar{b})}
\end{array}\!\right],\end{equation}
where $\nabla_a$ is the metric connection on $M$ and $\partial_{\bar{a}}$ is
the standard co\"ordinate derivative on~${\mathbb{R}}^p$, both pulled back to
$M\times{\mathbb{R}}^p$ in the obvious way. Notice that
$\nabla_a\partial_{\bar{a}}=\partial_{\bar{a}}\nabla_a$ acting upon any tensor
field. The Calabi operator ${\mathcal{C}}:
\begin{picture}(12,6)(0,0)
\put(0,0){\line(1,0){12}}
\put(0,6){\line(1,0){12}}
\put(0,0){\line(0,1){6}}
\put(6,0){\line(0,1){6}}
\put(12,0){\line(0,1){6}}
\end{picture}\longrightarrow
\begin{picture}(12,12)(0,2)
\put(0,0){\line(1,0){12}}
\put(0,6){\line(1,0){12}}
\put(0,12){\line(1,0){12}}
\put(0,0){\line(0,1){12}}
\put(6,0){\line(0,1){12}}
\put(12,0){\line(0,1){12}}
\end{picture}$\enskip breaks up into irreducibles
\begin{equation}\label{calabi_on_product}\begin{array}{r}
\begin{picture}(12,6)(0,0)
\put(0,0){\line(1,0){12}}
\put(0,6){\line(1,0){12}}
\put(0,0){\line(0,1){6}}
\put(6,0){\line(0,1){6}}
\put(12,0){\line(0,1){6}}
\end{picture}\,\Wedge^{1,0}\\[14pt]
\Wedge^{1,0}\otimes\Wedge^{0,1}\\[14pt]
\begin{picture}(12,6)(0,0)
\put(0,0){\line(1,0){12}}
\put(0,6){\line(1,0){12}}
\put(0,0){\line(0,1){6}}
\put(6,0){\line(0,1){6}}
\put(12,0){\line(0,1){6}}
\end{picture}\,\Wedge^{0,1}
\end{array}
\begin{picture}(82,0)
\qbezier (0,33) (0,33) (80,53) 
\qbezier (0,32) (0,32) (80,32)
\qbezier (0,31) (0,31) (80,12)
\qbezier (0,5) (0,5) (80,31)
\qbezier (0,4) (0,4) (80,11)
\qbezier (0,3) (0,3) (80,-5)
\qbezier (0,2) (0,2) (80,-26)
\qbezier (0,-26) (0,-26) (80,10)
\qbezier (0,-27) (0,-27) (80,-27)
\qbezier (0,-28) (0,-28) (80,-48) 
\end{picture}
\begin{array}{l}
\begin{picture}(12,12)(0,2)
\put(0,0){\line(1,0){12}}
\put(0,6){\line(1,0){12}}
\put(0,12){\line(1,0){12}}
\put(0,0){\line(0,1){12}}
\put(6,0){\line(0,1){12}}
\put(12,0){\line(0,1){12}}
\end{picture}\,\Wedge^{1,0}\\[5pt]
\begin{picture}(12,12)(0,2)
\put(0,0){\line(1,0){6}}
\put(0,6){\line(1,0){12}}
\put(0,12){\line(1,0){12}}
\put(0,0){\line(0,1){12}}
\put(6,0){\line(0,1){12}}
\put(12,6){\line(0,1){6}}
\end{picture}\,\Wedge^{1,0}\otimes\Wedge^{0,1}\\[5pt]
\begin{picture}(12,6)(0,0)
\put(0,0){\line(1,0){12}}
\put(0,6){\line(1,0){12}}
\put(0,0){\line(0,1){6}}
\put(6,0){\line(0,1){6}}
\put(12,0){\line(0,1){6}}
\end{picture}\,\Wedge^{1,0}
\otimes
\begin{picture}(12,6)(0,0)
\put(0,0){\line(1,0){12}}
\put(0,6){\line(1,0){12}}
\put(0,0){\line(0,1){6}}
\put(6,0){\line(0,1){6}}
\put(12,0){\line(0,1){6}}
\end{picture}\,\Wedge^{0,1}\\[5pt]
\Wedge^{2,0}\otimes\Wedge^{0,2}\\[5pt]
\Wedge^{1,0}\otimes\begin{picture}(12,12)(0,2)
\put(0,0){\line(1,0){6}}
\put(0,6){\line(1,0){12}}
\put(0,12){\line(1,0){12}}
\put(0,0){\line(0,1){12}}
\put(6,0){\line(0,1){12}}
\put(12,6){\line(0,1){6}}
\end{picture}\,\Wedge^{0,1}\\[5pt]
\begin{picture}(12,12)(0,2)
\put(0,0){\line(1,0){12}}
\put(0,6){\line(1,0){12}}
\put(0,12){\line(1,0){12}}
\put(0,0){\line(0,1){12}}
\put(6,0){\line(0,1){12}}
\put(12,0){\line(0,1){12}}
\end{picture}\,\Wedge^{0,1}
\end{array}
\end{equation}
where, for example, $\begin{picture}(12,6)(0,0)
\put(0,0){\line(1,0){12}}
\put(0,6){\line(1,0){12}}
\put(0,0){\line(0,1){6}}
\put(6,0){\line(0,1){6}}
\put(12,0){\line(0,1){6}}
\end{picture}\,\Wedge^{1,0}$ denotes the symmetric tensor product functor 
applied to~$\Wedge^{1,0}$.
In particular, there are four parts to ${\mathcal{C}}$ applied to
$\Wedge^{1,0}\otimes\Wedge^{0,1}$, specifically 
\begin{equation}\label{four_parts}\begin{array}{l}
\nabla_c\nabla_{[a}h_{b]}{}^{\bar{b}}-\frac12R_{ab}{}^d{}_ch_d{}^{\bar b}
\in\Gamma\big(\,\begin{picture}(12,12)(0,2)
\put(0,0){\line(1,0){6}}
\put(0,6){\line(1,0){12}}
\put(0,12){\line(1,0){12}}
\put(0,0){\line(0,1){12}}
\put(6,0){\line(0,1){12}}
\put(12,6){\line(0,1){6}}
\end{picture}\,\Wedge^{1,0}\otimes\Wedge^{0,1}\big),\\[4pt]
\nabla_{(a}\partial^{(\bar{a}}h_{b)}{}^{\bar{b})}
\in\Gamma\big(\begin{picture}(12,6)(0,0)
\put(0,0){\line(1,0){12}}
\put(0,6){\line(1,0){12}}
\put(0,0){\line(0,1){6}}
\put(6,0){\line(0,1){6}}
\put(12,0){\line(0,1){6}}
\end{picture}\,\Wedge^{1,0}
\otimes
\begin{picture}(12,6)(0,0)
\put(0,0){\line(1,0){12}}
\put(0,6){\line(1,0){12}}
\put(0,0){\line(0,1){6}}
\put(6,0){\line(0,1){6}}
\put(12,0){\line(0,1){6}}
\end{picture}\,\Wedge^{0,1}\big),\\[4pt]
\nabla_{[a}\partial^{[\bar{a}}h_{b]}{}^{\bar{b}]}
\in\Gamma\big(\Wedge^{2,0}\otimes\Wedge^{0,2}\big),\\[4pt]
\partial^{\bar{c}}\partial^{[\bar{a}}h_b{}^{\bar{b}]}
\in\Gamma\big(\Wedge^{1,0}\otimes\begin{picture}(12,12)(0,2)
\put(0,0){\line(1,0){6}}
\put(0,6){\line(1,0){12}}
\put(0,12){\line(1,0){12}}
\put(0,0){\line(0,1){12}}
\put(6,0){\line(0,1){12}}
\put(12,6){\line(0,1){6}}
\end{picture}\,\Wedge^{0,1}\big),
\end{array}\end{equation}
where we have used (\ref{alternative_calabi}) to obtain the first operator.
Only this first operator is quotiented by ${\mathcal{R}}:\Wedge^2\to
\begin{picture}(12,12)(0,2)
\put(0,0){\line(1,0){12}}
\put(0,6){\line(1,0){12}}
\put(0,12){\line(1,0){12}}
\put(0,0){\line(0,1){12}}
\put(6,0){\line(0,1){12}}
\put(12,0){\line(0,1){12}}
\end{picture}$ in passing to the operator ${\mathcal{L}}$ 
in~(\ref{key_complex}), specifically to obtain the compatibility condition
\begin{equation}\label{compatibility_condition}
\nabla_c\nabla_{[a}h_{b]\bar{b}}=R_{ab}{}^d{}_c\kappa_{d\bar{b}}
\quad\mbox{for some}\enskip \kappa_{b\bar{b}}\in\Gamma(\Wedge^{1,1}).
\end{equation}
\begin{prop}
Suppose $M$ is locally symmetric with nondegenerate Ricci tensor but having a
Hermitian factor.  Then \eqref{key_complex} fails to be locally exact on the
Riemannian product $M\times{\mathbb{R}}^p$, where ${\mathbb{R}}^p$ has the
standard flat metric.
\end{prop}
\begin{proof} Choose a non-trivial covariantly constant $1$-form
$\theta_{\bar{b}}$ on ${\mathbb{R}}^p$ and on $M$ let us choose a $2$-form
$J_{ab}$ by pulling back the K\"ahler form from a Hermitian factor.  Being
closed, we may locally choose a $1$-form $\phi_a$ on $M$ such that
$\nabla_{[a}\phi_{b]}=J_{ab}$.  Let us use the same notation $\phi_a$ for the
pullback of this form to $M\times{\mathbb{R}}^p$.  Also pull back
$\theta_{\bar{b}}$ to the product and consider
$h_{b\bar{b}}\equiv\phi_b\theta_{\bar{b}}\in\Gamma(\Wedge^{1,1})$ as a
symmetric $2$-form there.  As $\partial_{\bar{a}}\theta_{\bar{b}}=0$, the last
three operators from (\ref{four_parts}) annihilate~$h_{b\bar{b}}$.  Also, as
$\nabla_cJ_{ab}=0$, the compatibility condition (\ref{compatibility_condition})
holds with~$\kappa_{b\bar b}=0$.  But $h_{b\bar{b}}$ cannot be written as
$$\nabla_b\xi_{\bar{b}}+\partial_{\bar{b}}X_b\quad\mbox{for}\enskip
X_b\in\Gamma(\Wedge^{1,0})\enskip\mbox{such that}\enskip
\nabla_{(a}X_{b)}=0\enskip\mbox{and}\enskip
\xi_{\bar{b}}\in\Gamma(\Wedge^{0,1})$$
as would be required by (\ref{Killing_on_product_manifold}) to be in the range 
of~${\mathcal{K}}$ since, if this were the case, then  
$$0=\nabla_{[a}\nabla_{b]}\xi_{\bar{b}}
=\nabla_{[a}h_{b]\bar{b}}-\partial_{\bar{b}}\nabla_{[a}X_{b]}\quad\Rightarrow
\quad\theta_{\bar{b}}J_{ab}=\partial_{\bar{b}}\nabla_{[a}X_{b]}
=\partial_{\bar{b}}\nabla_aX_b.$$
Then, as $J_{ab}$ is covariant constant, we now find that 
$$\textstyle 0=\theta_{\bar{b}}\nabla_{[c}J_{a]b}
=\partial_{\bar{b}}\nabla_{[c}\nabla_{a]}X_b=
\frac12R_{ac}{}^d{}_b\partial_{\bar{b}}X_d$$
and hence that $\partial_{\bar{b}}X_d=0$. It follows that $J_{ab}=0$, a 
contradiction.
\end{proof}
The following proposition completes the proof of Theorem~\ref{main_thm}.
\begin{prop} Suppose $M$ is a Riemannian locally symmetric space with
nondegenerate Ricci tensor and no Hermitian factors. Then \eqref{key_complex}
is locally exact on the Riemannian product $M\times{\mathbb{R}}^p$, where
${\mathbb{R}}^p$ has the standard flat metric.
\end{prop}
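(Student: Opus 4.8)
The plan is to work entirely within the type decomposition \eqref{symmetric_two-forms_according_to_type} and to build a potential one piece at a time. Write a symmetric $2$-form $h$ with ${\mathcal L}h=0$ as $(h_{ab},h_{a\bar b},h_{\bar a\bar b})$. Since the curvature of the product is supported on $M$, the image of ${\mathcal R}$ meets only the all-unbarred Riemann tensors, so ${\mathcal L}h=0$ decouples into three conditions: the all-unbarred component of ${\mathcal C}h$ equals the $M$-Calabi tensor of $h_{ab}$ and the requirement is ${\mathcal L}_M h_{ab}=0$; the mixed part contributes the four terms \eqref{four_parts}, of which the first is quotiented by ${\mathcal R}$ and so satisfies the compatibility condition \eqref{compatibility_condition} while the other three (not being quotiented) must vanish; and the purely barred component of ${\mathcal C}h$ is the flat Calabi tensor of $h_{\bar a\bar b}$ on ${\mathbb R}^p$, which therefore vanishes.

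First I would dispose of the two diagonal pieces. The all-unbarred condition is ${\mathcal L}_M h_{ab}=0$ for the factor $M$; because $M$ has nondegenerate Ricci tensor (no flat factors), Lemma~\ref{really_good_lemma} makes \eqref{key_complex} locally exact on $M$, so we may solve $h_{ab}=\nabla_{(a}X^0_{b)}$ with $X^0_a\in\Gamma(\Wedge^{1,0})$, the solution depending smoothly on the flat coordinates since its construction is through the flat bundle $F$ and commutes with $\partial_{\bar b}$. Subtracting ${\mathcal K}(X^0,0)$ via \eqref{Killing_on_product_manifold} kills the $(2,0)$-part. Likewise the vanishing flat Calabi tensor lets Calabi's classical result on ${\mathbb R}^p$ produce $\eta^0_{\bar b}$ with $h_{\bar a\bar b}=\partial_{(\bar a}\eta^0_{\bar b)}$, and subtracting ${\mathcal K}(0,\eta^0)$ kills the $(0,2)$-part. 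We are then left with a $2$-form $H$ of pure type $(1,1)$, still satisfying ${\mathcal L}H=0$, and it suffices to realise $H_{a\bar b}=\nabla_a\xi_{\bar b}+\partial_{\bar b}X_a$ with $X_a$ a Killing field of $M$ (so that $\nabla_{(a}X_{b)}=0$) and $\xi_{\bar b}$ affine in the flat directions (so that $\partial_{(\bar a}\xi_{\bar b)}=0$).

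The heart of the matter is this genuinely coupled $(1,1)$-problem, and here the hypothesis of no Hermitian factor enters. For each fixed barred index the compatibility condition \eqref{compatibility_condition} reads $\nabla_c\mu_{ab}=R_{ab}{}^d{}_c\kappa_{d\bar b}$ for the $M$-two-form $\mu_{ab}=\nabla_{[a}H_{b]\bar b}$, which is precisely the hypothesis of the backward prolongation Lemma~\ref{backward_prolongation}. Applying that lemma (valid exactly because $M$ has neither Hermitian nor flat factors) yields $\nabla_c\kappa_{d\bar b}=\nabla_{[c}H_{d]\bar b}$; hence $\kappa_{d\bar b}$ is a Killing field of $M$ in its unbarred index and $H_{d\bar b}-\kappa_{d\bar b}$ is closed on $M$, so $H_{d\bar b}=\kappa_{d\bar b}+\nabla_d f_{\bar b}$ for some $f_{\bar b}$. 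I would then set $\xi_{\bar b}:=f_{\bar b}$ and seek a Killing field $X_a$ with $\partial_{\bar b}X_a=\kappa_{a\bar b}$.

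It remains to integrate $\kappa$ in the flat directions and to arrange $\partial_{(\bar a}\xi_{\bar b)}=0$. Integrability of $\partial_{\bar b}X_a=\kappa_{a\bar b}$ needs $\partial_{[\bar c}\kappa_{|a|\bar b]}=0$: differentiating $\nabla_c\kappa_{d\bar b}=\nabla_{[c}H_{d]\bar b}$ and using the vanishing of the third term of \eqref{four_parts} shows $\partial_{[\bar c}\kappa_{|a|\bar b]}$ is covariantly constant on $M$, hence zero by \eqref{hit_and_skew}; the flat Poincar\'e lemma then produces $X_a$, and since $\nabla_{(a}X_{b)}$ is independent of the flat coordinates the integration constant can be chosen to make $X_a$ Killing. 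Finally, the second term of \eqref{four_parts} forces $\nabla_a\nabla_b\partial_{(\bar a}f_{\bar b)}=0$, so $\partial_{(\bar a}f_{\bar b)}$ is $M$-constant (again by \eqref{hit_and_skew}); evaluating $f_{\bar b}$ at a fixed point of $M$ exhibits this tensor as a symmetric gradient in the flat variables alone, and subtracting that from $f_{\bar b}$ (which does not alter $\nabla_a f_{\bar b}$) achieves $\partial_{(\bar a}\xi_{\bar b)}=0$. The pair $(X_a,\xi_{\bar b})$ then solves ${\mathcal K}(X,\xi)=H$. I expect the backward prolongation step to be the sole obstacle of substance: it is the only place the absence of Hermitian factors is used, and by the companion proposition it genuinely fails once a Hermitian factor is present.
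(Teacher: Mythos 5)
Your proposal is correct, and its skeleton coincides with the paper's: reduce to the $(1,1)$-part using local exactness on each factor (the paper does this silently; your explicit justification, including smooth dependence on the flat parameters, is a welcome elaboration), apply Lemma~\ref{backward_prolongation} fibrewise to the compatibility condition \eqref{compatibility_condition} to get $\nabla_a\kappa_{b\bar b}=\nabla_{[a}H_{b]\bar b}$, and integrate $\kappa$ along the flat fibres to produce the Killing field $X_a$, using the third condition of \eqref{have} together with \eqref{hit_and_skew} for the closedness $\partial^{[\bar a}\kappa_b{}^{\bar b]}=0$ --- all exactly as in the paper. Where you genuinely diverge is the endgame for $\xi_{\bar b}$: the paper packages the two remaining requirements of \eqref{want} into the flat connection \eqref{flat_connection} on $\Wedge^{0,1}\oplus\Wedge^{0,2}$, verifying its integrability via \eqref{look_it_is_skew}, which consumes the second, third, \emph{and} fourth conditions of \eqref{have}; you instead observe that $\psi_{a\bar b}=H_{a\bar b}-\kappa_{a\bar b}$ is closed in the unbarred index (immediate, since $\nabla_a\kappa_{b\bar b}$ is already skew), integrate it by the Poincar\'e lemma on $M$ to get $\psi_{a\bar b}=\nabla_af_{\bar b}$, and then correct $f_{\bar b}$ by a gradient in the flat variables alone, using only the second condition of \eqref{have} plus \eqref{hit_and_skew} to see that $\partial_{(\bar a}f_{\bar b)}$ is $M$-constant. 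This is more elementary and, notably, never invokes the fourth condition $\partial^{\bar c}\partial^{[\bar a}h_b{}^{\bar b]}=0$ (legitimate, since only $\partial_{(\bar a}\xi_{\bar b)}=0$ is demanded, the skew part of $\partial_{\bar a}\xi_{\bar b}$ being free), whereas the paper's route needs it to integrate the full connection; your argument thus shows a little more than the paper's. Two cosmetic points: your opening claim that ${\mathcal L}h=0$ ``decouples into three conditions'' is overstated for a general $h$ --- the mixed targets in \eqref{calabi_on_product} also receive contributions from $h_{ab}$ and $h_{\bar a\bar b}$ --- but your actual argument only applies \eqref{four_parts} after reducing to pure type $(1,1)$, which is sound; and no choice of ``integration constant'' is needed to make $X_a$ Killing, since $\nabla_{(a}\kappa_{b)\bar b}=0$ makes fibre integration with zero constant work outright, exactly as in the paper's differentiation under the integral sign.
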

\begin{proof}
We already know that (\ref{key_complex}) is locally exact on $M$ and
on~${\mathbb{R}}^p$. Therefore, looking at~(\ref{Killing_on_product_manifold}),
(\ref{calabi_on_product}), (\ref{four_parts}), and
(\ref{compatibility_condition}), we are required to show that if
$h_{b\bar{b}}\in\Gamma(\Wedge^{1,1})$ satisfies
\begin{equation}\label{have}
\nabla_c\nabla_{[a}h_{b]\bar{b}}=R_{ab}{}^d{}_c\kappa_{d\bar{b}},\enskip
\nabla_{(a}\partial^{(\bar{a}}h_{b)}{}^{\bar{b})}=0,\enskip
\nabla_{[a}\partial^{[\bar{a}}h_{b]}{}^{\bar{b}]}=0,\enskip\mbox{and}\enskip
\partial^{\bar{c}}\partial^{[\bar{a}}h_b{}^{\bar{b}]}=0,\end{equation} then
locally we may find $X_b\in\Gamma(\Wedge^{1,0})$ and
$\xi_{\bar{b}}\in\Gamma(\Wedge^{0,1})$ such that
\begin{equation}\label{want}\nabla_{(a}X_{b)}=0,\quad
\nabla_a\xi_{\bar{a}}+\partial_{\bar{a}}X_a=h_{a\bar{a}},\quad\mbox{and}\quad
\partial_{(\bar{a}}\xi_{\bar{b})}=0.\end{equation} 
{From} the first equation of (\ref{have}), Lemma~\ref{backward_prolongation}
tells us that $\nabla_a\kappa_{b\bar{b}}=\nabla_{[a}h_{b]\bar{b}}$ and the
third equation from (\ref{have}) implies that
$\nabla_a\partial^{[\bar{a}}\kappa_b{}^{\bar{b}]}=0$. 
Therefore $R_{ab}{}^c{}_d\partial^{[\bar{a}}\kappa_c{}^{\bar{b}]}=0$. 
From~(\ref{hit_and_skew}), it follows that
$\partial^{[\bar{a}}\kappa_b{}^{\bar{b}]}=0$. Thus, as a closed $1$-form along
the fibres of $M\times{\mathbb{R}}^p\to M$, we may integrate to find $X_b$ such
that $\partial_{\bar{b}}X_b=\kappa_{b\bar{b}}$ and, by differentiating under
the integral sign, it follows that $\nabla_{(a}X_{b)}=0$, which is the first 
requirement of~(\ref{want}). If we
introduce $\psi_{a\bar{a}}\equiv h_{a\bar{a}}-\kappa_{a\bar{a}}$, then 
\begin{equation}\label{psi_closed}
\nabla_a\psi_{b\bar{b}}=\nabla_ah_{b\bar{b}}-\nabla_a\kappa_{b\bar{b}}
=\nabla_ah_{b\bar{b}}-\nabla_{[a}h_{b]\bar{b}}
=\nabla_{(a}h_{b)\bar{b}}\end{equation}
so
$$\nabla_a\partial^{\bar{a}}\psi_b{}^{\bar{b}}
=\nabla_{(a}\partial^{\bar{a}}h_{b)}{}^{\bar{b}}
=\nabla_{(a}\partial^{[\bar{a}}h_{b)}{}^{\bar{b}]},$$
this last equality from the second equation of~(\ref{have}). Using, once more,
the third equation from (\ref{have}), we now have
$$\nabla_a\partial^{\bar{a}}\psi_b{}^{\bar{b}}
=\nabla_{(a}\partial^{[\bar{a}}h_{b)}{}^{\bar{b}]}
+\nabla_{[a}\partial^{[\bar{a}}h_{b]}{}^{\bar{b}]}
=\nabla_a\partial^{[\bar{a}}h_b{}^{\bar{b}]}$$
and (\ref{hit_and_skew}) implies that
\begin{equation}\label{look_it_is_skew}
\partial^{\bar{a}}\psi_b{}^{\bar{b}}=\partial^{[\bar{a}}h_b{}^{\bar{b}]}.
\end{equation}
Having found~$X_b$ and introduced 
$\psi_{a\bar{a}}=h_{a\bar{a}}-\kappa_{a\bar{a}}$, in order to
satisfy (\ref{want}) it remains to find $\xi_{\bar{b}}$, such that
$$\nabla_b\xi_{\bar{b}}=\psi_{b\bar{b}}\quad\mbox{and}\quad
\partial_{(\bar{a}}\xi_{\bar{b})}=0.$$
{From} (\ref{psi_closed}) and (\ref{look_it_is_skew}) it follows that 
$$\nabla_{[a}\psi_{b]\bar{b}}=0\quad\mbox{and}\quad
\partial^{(\bar{a}}\psi_b{}^{\bar{b})}=0$$
so $\xi_{\bar{b}}$ can be obtained by integrating along the fibres of 
$M\times{\mathbb{R}}^p\to{\mathbb{R}}^p$.
\end{proof}
Notice that, in this proof, we apparently omitted to use the last constraint of
(\ref{have}), namely that
$\partial^{\bar{c}}\partial^{[\bar{a}}h_b{}^{\bar{b}]}=0$. In fact, as in the
proof of Proposition~\ref{prolongation_isomorphism}, this constraint follows by
prolongation from (\ref{look_it_is_skew}) so nothing is lost by this apparent
omission.

\section{Concluding remarks}
\subsection{Some key examples} By design, Theorem~\ref{main_thm} includes the 
round sphere~$S^n$. In this case, the homomorphism ${\mathcal{R}}$ in 
(\ref{key_hom}) vanishes, whence $K=\Wedge^2$ and (\ref{calabi}) reads
$$h_{ab}\mapsto(\nabla_{(a}\nabla_{c)}+g_{ac})h_{bd}
              -(\nabla_{(b}\nabla_{c)}+g_{bc})h_{ad}
              -(\nabla_{(a}\nabla_{d)}+g_{ad})h_{bc}
              +(\nabla_{(b}\nabla_{d)}+g_{bd})h_{ac}$$
on the {\em unit\/} sphere (where $R_{abcd}=g_{ac}g_{bd}-g_{bc}g_{ad}$), as 
may be gleaned from the {\em Lagrangian atlases\/} of~\cite{C} or found 
explicitly as~\cite[Th\'eor\`eme~6.1]{GG}.

For a product of round spheres $S^p\times S^q$, we may decompose bundles
according to type as in~\S\ref{products}, and it is easy to check that
$K=\Wedge^{2,0}\oplus\Wedge^{0,2}$ and, therefore, that $C=\Wedge^{1,1}$.
Theorem~\ref{main_thm} says that (\ref{key_complex}) is exact on all these
products save for $S^2\times S^1$ (and $S^1\times S^2$).

For complex projective space ${\mathbb{CP}}_n$ with its Fubini-Study metric 
and K\"ahler form $J_{ab}$, 
we find that
$$K=\{\mu_{cd}\mid J_a{}^cJ_b{}^d\mu_{cd}=\mu_{ab}\}\quad\mbox{and}\quad
C=\{\mu_{cd}\mid J_a{}^cJ_b{}^d\mu_{cd}=-\mu_{ab}\},$$
bundles with familiar complexifications, namely $\Wedge^{1,1}$ and 
$\Wedge^{2,0}\oplus\Wedge^{0,2}$, respectively. As an 
${\mathrm{SU}}(n)$-bundle, the image 
$C\hookrightarrow\begin{picture}(12,12)(0,2)
\put(0,0){\line(1,0){12}}
\put(0,6){\line(1,0){12}}
\put(0,12){\line(1,0){12}}
\put(0,0){\line(0,1){12}}
\put(6,0){\line(0,1){12}}
\put(12,0){\line(0,1){12}}
\end{picture}$\enskip is irreducible (it is ${\mathcal{W}}_9$ in the full
decomposition of~$\:\begin{picture}(12,12)(0,2)
\put(0,0){\line(1,0){12}}
\put(0,6){\line(1,0){12}}
\put(0,12){\line(1,0){12}}
\put(0,0){\line(0,1){12}}
\put(6,0){\line(0,1){12}}
\put(12,0){\line(0,1){12}}
\end{picture}$\enskip into 10 irreducible bundles given in~\cite{TV}). This 
raises the possibility of removing further ${\mathrm{SU}}(n)$-subbundles 
from~$\:\begin{picture}(12,12)(0,2)
\put(0,0){\line(1,0){12}}
\put(0,6){\line(1,0){12}}
\put(0,12){\line(1,0){12}}
\put(0,0){\line(0,1){12}}
\put(6,0){\line(0,1){12}}
\put(12,0){\line(0,1){12}}
\end{picture}$\enskip whilst keeping local exactness of the resulting complex. 
One such option has already been considered in~\cite{ES}, specifically the
complex
\begin{equation}\label{symplectic_complex}
\Wedge^1=\begin{picture}(6,6)(0,0)
\put(0,0){\line(1,0){6}}
\put(0,6){\line(1,0){6}}
\put(0,0){\line(0,1){6}}
\put(6,0){\line(0,1){6}}
\end{picture}\stackrel{\mathcal{K}}{\longrightarrow}
\begin{picture}(12,6)(0,0)
\put(0,0){\line(1,0){12}}
\put(0,6){\line(1,0){12}}
\put(0,0){\line(0,1){6}}
\put(6,0){\line(0,1){6}}
\put(12,0){\line(0,1){6}}
\end{picture}\longrightarrow
\begin{picture}(12,12)(0,2)
\put(0,0){\line(1,0){12}}
\put(0,6){\line(1,0){12}}
\put(0,12){\line(1,0){12}}
\put(0,0){\line(0,1){12}}
\put(6,0){\line(0,1){12}}
\put(12,0){\line(0,1){12}}
\end{picture}\raisebox{-3.5pt}{$\scriptstyle\perp$},
\end{equation}
where this last bundle comprises Riemann tensors that are trace-free with 
respect to $J_{ab}$.

In \cite{EGold} it is shown that (\ref{symplectic_complex}) is globally exact 
on~${\mathbb{CP}}_n$, whilst in~\cite{ES} it is shown that the local 
cohomology is a constant sheaf with fibre ${\mathfrak{su}}(n+1)$. In fact, we 
have removed three ${\mathrm{SU}}(n)$-irreducibles to obtain 
(\ref{symplectic_complex}):
$$\begin{picture}(12,12)(0,2)
\put(0,0){\line(1,0){12}}
\put(0,6){\line(1,0){12}}
\put(0,12){\line(1,0){12}}
\put(0,0){\line(0,1){12}}
\put(6,0){\line(0,1){12}}
\put(12,0){\line(0,1){12}}
\end{picture}
=\begin{picture}(12,12)(0,2)
\put(0,0){\line(1,0){12}}
\put(0,6){\line(1,0){12}}
\put(0,12){\line(1,0){12}}
\put(0,0){\line(0,1){12}}
\put(6,0){\line(0,1){12}}
\put(12,0){\line(0,1){12}}
\end{picture}\raisebox{-3.5pt}{$\scriptstyle\perp$}\oplus{\mathcal{R}}(C)
\oplus\Wedge_\perp^{1,1}\oplus\Wedge^0.$$
Replacing just $\Wedge^0$ is sufficient to restore local exactness as follows.
\begin{thm}
On complex projective space with its Fubini-Study metric, the complex
$$\Wedge^1=\begin{picture}(6,6)(0,0)
\put(0,0){\line(1,0){6}}
\put(0,6){\line(1,0){6}}
\put(0,0){\line(0,1){6}}
\put(6,0){\line(0,1){6}}
\end{picture}\stackrel{\mathcal{K}}{\longrightarrow}
\begin{picture}(12,6)(0,0)
\put(0,0){\line(1,0){12}}
\put(0,6){\line(1,0){12}}
\put(0,0){\line(0,1){6}}
\put(6,0){\line(0,1){6}}
\put(12,0){\line(0,1){6}}
\end{picture}\longrightarrow
\begin{picture}(12,12)(0,2)
\put(0,0){\line(1,0){12}}
\put(0,6){\line(1,0){12}}
\put(0,12){\line(1,0){12}}
\put(0,0){\line(0,1){12}}
\put(6,0){\line(0,1){12}}
\put(12,0){\line(0,1){12}}
\end{picture}\raisebox{-3.5pt}{$\scriptstyle\perp$}\oplus\Wedge^0$$
is locally exact.\end{thm}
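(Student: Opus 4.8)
The plan is to bootstrap from the local exactness of \eqref{key_complex}, which Theorem~\ref{main_thm} already supplies on the irreducible space $\mathbb{CP}_n$. For brevity write $\mathbb{W}$ for the bundle of algebraic curvature tensors receiving $\mathcal{C}$, and $\mathbb{W}_\perp\subset\mathbb{W}$ for its $J$-trace-free summand, the target of the symplectic operator of \eqref{symplectic_complex}; the displayed four-fold decomposition reads $\mathbb{W}=\mathbb{W}_\perp\oplus\mathcal{R}(C)\oplus\Wedge^{1,1}_\perp\oplus\Wedge^0$. The operator of \eqref{key_complex} is $\mathcal{C}$ followed by projection off $\mathcal{R}(C)$, so its three components are $(\mathcal{C}h)_{\mathbb{W}_\perp}$, $(\mathcal{C}h)_{\Wedge^{1,1}_\perp}$ and $(\mathcal{C}h)_{\Wedge^0}$, whereas the operator of the present theorem retains only the first and the last. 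Since the range of $\mathcal{K}$ already lies in $\mathcal{R}(C)$, both operators annihilate it and the new sequence is a complex; and because \eqref{key_complex} is locally exact, its kernel is \emph{exactly} the range of $\mathcal{K}$. It therefore suffices to prove that discarding the $\Wedge^{1,1}_\perp$ component does not enlarge the kernel, i.e.\ that $(\mathcal{C}h)_{\mathbb{W}_\perp}=0$ together with $(\mathcal{C}h)_{\Wedge^0}=0$ forces $(\mathcal{C}h)_{\Wedge^{1,1}_\perp}=0$.

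First I would use that, by \eqref{straightforward_calculation}, $(\mathcal{C}h)_{abcd}$ is precisely the curvature of the prolongation connection evaluated on the prolongation of~$h$, and hence obeys a second Bianchi identity. On the locally symmetric $\mathbb{CP}_n$ this identity expresses $\nabla_{[e}(\mathcal{C}h)_{ab]cd}$ as an algebraic contraction of the covariantly constant curvature $R_{ab}{}^c{}_d$ against $\mathcal{C}h$ itself, coupling the four components. Projecting onto $\Wedge^{1,1}_\perp$-content and imposing $(\mathcal{C}h)_{\mathbb{W}_\perp}=0=(\mathcal{C}h)_{\Wedge^0}$ should collapse this, in the spirit of the $K\otimes C$ versus $\Wedge^2\otimes K$ separation that produced \eqref{R_kills_phi}, to the purely algebraic statement that the curvature annihilates $\beta:=(\mathcal{C}h)_{\Wedge^{1,1}_\perp}$, namely $R_{ab}{}^e{}_{[c}\beta_{d]e}=0$. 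By the eigenvalue analysis in the proof of Theorem~\ref{eigenvalues_of_curvature}, the only nonzero $2$-forms so annihilated are multiples of the K\"ahler form; as $\beta$ is a section of the primitive summand $\Wedge^{1,1}_\perp$, it cannot be such a multiple, so $\beta=0$, as required.

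The step I expect to be the crux is the $\mathrm{U}(n)$-representation bookkeeping that carries out this projection: one must verify that the coefficient linking $\beta$ to the retained components is nonzero, so that their vanishing genuinely propagates, and that what survives is the full equation $R_{ab}{}^e{}_{[c}\beta_{d]e}=0$ rather than a weaker trace of it. A reassuring independent check comes from \cite{ES}: the symplectic complex \eqref{symplectic_complex}, whose target is $\mathbb{W}_\perp$ alone, has local cohomology the constant sheaf $\mathfrak{su}(n+1)$. Our target merely adjoins $\Wedge^0$, so the local cohomology of the present complex is the subspace of $\mathfrak{su}(n+1)$ on which the scalar component $(\mathcal{C}h)_{\Wedge^0}$ vanishes; the theorem is thus equivalent to the assertion that this scalar component is injective on $\mathfrak{su}(n+1)$ (the classes being carried by the moment-map data of the $\mathrm{SU}(n+1)$-action, whose scalar is nonzero). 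Confirming that the Bianchi mechanism above is exactly the incarnation of this injectivity would both pin down the constants and verify that it is indeed $\Wedge^0$, and not $\Wedge^{1,1}_\perp$, whose retention restores local exactness.
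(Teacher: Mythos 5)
Your opening reduction is sound: since Theorem~\ref{main_thm} applies to ${\mathbb{CP}}_n$, local exactness of the stated complex is indeed equivalent to the implication that vanishing of the $\mathbb{W}_\perp$ and $\Wedge^0$ components of ${\mathcal{C}}h$ forces the $\Wedge^{1,1}_\perp$ component $\beta$ to vanish. The gap is in the mechanism you propose for this implication. The differential identity obeyed by $({\mathcal{C}}h)_{abcd}$ comes from applying $d^\nabla$ to (\ref{straightforward_calculation}): its right-hand side is the curvature (\ref{curvature}) of the prolongation connection contracted against the $1$-form $u_b=\bigl[h_{bc};\,2\nabla_{[c}h_{d]b}\bigr]$ with values in $E$, and by (\ref{refined_curvature}) this depends precisely on the $C$-component $\theta_{bcd}$ of $u$ --- that is, on the \emph{first jet of $h$} --- not on ``an algebraic contraction of $R$ against ${\mathcal{C}}h$ itself,'' as you assert. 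So projecting the Bianchi identity does not a priori yield the closed algebraic equation $R_{ab}{}^e{}_{[c}\beta_{d]e}=0$; whether the unconstrained $\theta$-terms can be eliminated after projection is exactly what would need proving, and you concede that this ``bookkeeping'' (including the nonvanishing of the relevant coupling constant) is not carried out. Your endgame, by contrast, is fine: a section of $\Wedge^{1,1}_\perp$ annihilated by the curvature action is parallel by Ambrose--Singer, hence a multiple of $J_{ab}$, hence zero by primitivity.

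What you relegate to a ``reassuring independent check'' is essentially the paper's actual proof, and it is both shorter and complete: quoting \cite{ES}, the local kernel of $\bigodot^2\!\Wedge^1\to\mathbb{W}_\perp$ consists exactly of $h_{ab}=\nabla_{(a}X_{b)}+Y_{(a}\phi_{b)}$ with $\nabla_{(a}Y_{b)}=0$ and $\nabla_{[a}\phi_{b]}=J_{ab}$; taking the scalar component to be (\ref{the_composition}), one computes that such $h_{ab}$ maps to $6J^{ab}\nabla_aY_b$ (the $\nabla_{(a}X_{b)}$ part contributes nothing by Lemma~\ref{khavkine_key}, since the scalar curvature is constant); and $J^{ab}\nabla_aY_b=0$ forces $Y_a=0$ because, setting $\mu_{ab}=\nabla_aY_b$, Proposition~\ref{prolongation_isomorphism} gives $0=\nabla_a(J^{bc}\mu_{bc})=J^{bc}R_{bc}{}^d{}_aY_d=2J^d{}_aY_d$ and $J_{ab}$ is nondegenerate. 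Your reformulation of the theorem as injectivity of the scalar component on the $\mathfrak{su}(n+1)$ local cohomology of (\ref{symplectic_complex}) is correct, but you justify that injectivity only by an unproved parenthetical appeal to ``moment-map data, whose scalar is nonzero''; it requires the above computation on the explicit representatives $Y_{(a}\phi_{b)}$. As it stands, neither your primary route nor your check contains the decisive calculation, so the proposal is a plausible plan with its central step missing rather than a proof.
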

\begin{proof}
In~\cite{ES} it is shown that the local kernel 
of\enskip$\begin{picture}(12,6)(0,0)
\put(0,0){\line(1,0){12}}
\put(0,6){\line(1,0){12}}
\put(0,0){\line(0,1){6}}
\put(6,0){\line(0,1){6}}
\put(12,0){\line(0,1){6}}
\end{picture}\to
\begin{picture}(12,12)(0,2)
\put(0,0){\line(1,0){12}}
\put(0,6){\line(1,0){12}}
\put(0,12){\line(1,0){12}}
\put(0,0){\line(0,1){12}}
\put(6,0){\line(0,1){12}}
\put(12,0){\line(0,1){12}}
\end{picture}\raisebox{-3.5pt}{$\scriptstyle\perp$}$ comprises tensors
of the form
$$h_{ab}=\nabla_{(a}X_{b)}+Y_{(a}\phi_{b)},$$
where $\nabla_{(a}Y_{b)}=0$ and $\nabla_{[a}\phi_{b]}=J_{ab}$. 
Looking ahead to \S\ref{warped_products}, 
for\enskip$\begin{picture}(12,6)(0,0)
\put(0,0){\line(1,0){12}}
\put(0,6){\line(1,0){12}}
\put(0,0){\line(0,1){6}}
\put(6,0){\line(0,1){6}}
\put(12,0){\line(0,1){6}}
\end{picture}\to\Wedge^0$, we may as well take the differential operator 
(\ref{the_composition}) and compute that, in this case,
$$\nabla_{(a}X_{b)}+Y_{(a}\phi_{b)}\longmapsto 6J^{ab}\nabla_aY_b.$$
Hence, being in the kernel of this operator forces $J^{bc}\nabla_bY_c=0$. 
{From} Proposition~\ref{prolongation_isomorphism}, it follows that, if we set
$\mu_{ab}=\nabla_aY_b$, then $\nabla_a\mu_{bc}=R_{bc}{}^d{}_aY_d$ and now
$$0=\nabla_a(J^{bc}\mu_{bc})=J^{bc}\nabla_a\mu_{bc}=J^{bc}R_{bc}{}^d{}_aY_d
=2J{}^d{}_aY_d.$$
We conclude that  $Y_a=0$ and the proof is complete.
\end{proof}

\subsection{The Riemann curvature operator}
Critical to our proof of Theorem~\ref{main_thm} was some control of the
eigenvalues of the Riemann curvature tensor when viewed as a symmetric
endomorphism (\ref{R}) of the $2$-forms. It is worthwhile noting what are 
these eigenvalues for the sphere and complex projective space. The 
normalisation $R_{ab}=g_{ab}$ implies
\begin{itemize}
\item for $S^n$:\quad $R_{abcd}=\frac1{n-1}(g_{ac}g_{bd}-g_{bc}g_{ad})$
\item for ${\mathbb{CP}}_n$:\quad $R_{abcd}=
\frac1{2(n+1)}
(g_{ac}g_{bd}-g_{bc}g_{ad}+J_{ac}J_{bd}-J_{bc}J_{ad}+2J_{ab}J_{cd}),$ 
\end{itemize}
where $J_{ab}$ is the K\"ahler form on ${\mathbb{CP}}_n$. Thus, on $S^n$
there is just one eigenspace, having eigenvalue $2/(n-1)$. On
${\mathbb{CP}}_n$, we find
\begin{itemize}
\item $\Wedge^{2,0}\oplus\Wedge^{0,2}$\enskip with eigenvalue $0$,
\item $\Wedge_\perp^{1,1}$\enskip with eigenvalue $2/(n+1)$,
\item $\langle J_{ab}\rangle$\enskip with eigenvalue $2$.
\end{itemize}
Complex projective space provides a good illustration of
Theorem~\ref{eigenvalues_of_curvature}.  It is essential that we are in the
Riemannian setting in order to conclude that all eigenvalues of
$\omega_{ab}\mapsto R_{ab}{}^{cd}\omega_{cd}$ are real.  This and several other
steps break down in the Lorentzian setting.  For example, it is no longer true
that a Ricci flat locally symmetric space need be flat.
We return to the Lorentzian case in a separate article~\cite{CELM2}.

\subsection{Warped products}\label{warped_products}
The methods in this article also apply to metrics beyond the locally symmetric
realm.  For example, following Khavkine~\cite{K}, we may ask about the range of
the Killing operator on Friedmann-Lemaitre-Robertson-Walker (FLRW) metrics,
namely {\em warped products\/} 
\begin{equation}\label{warped}
\Omega^2(t)g_{ab}\pm dt^2\quad\mbox{on}\enskip M\times{\mathbb{R}},
\end{equation}
where $\Omega:{\mathbb{R}}\to{\mathbb{R}}_{>0}$ is a smooth function and
$g_{ab}$ is Riemannian constant curvature.  In general, a warped product
(\ref{warped}) with the negative sign in front of $dt^2$ and $g_{ab}$ an
arbitrary Riemannian metric is known as a Generalised Robertson-Walker (GRW)
spacetime~\cite{MM}.  In the spirit of this article, however, we shall restrict
to the {\em spatially locally symmetric\/} case, i.e.~where the Riemannian
metric $g_{ab}$ is locally symmetric.

In two dimensions, for example, let us consider a metric of the form 
\begin{equation}\label{warped_metric_in_2d}
\Omega^2(t)dx^2+dt^2\quad\mbox{on}\enskip{\mathbb{R}}\times{\mathbb{R}}.
\end{equation}
Splitting $\bigodot^2\!\Wedge^1$ according to type 
(\ref{symmetric_two-forms_according_to_type}), the Killing operator is
$$X\,dx+\xi\,dt\stackrel{{\mathcal{K}}}{\longmapsto}
\left[\!\begin{array}{c}X_x+\Omega^2\Upsilon\xi\\ 
\xi_x+X_t-2\Upsilon X\\ \xi_t\end{array}\!\right],$$
where $\Upsilon\equiv\Omega^{-1}\Omega_t$. The {\em Khavkine operator\/} is
then
\begin{equation}\label{khavkine_operator}
\left[\!\begin{array}{c}p\\ q\\ r\end{array}\!\right]\mapsto
\left[\!\begin{array}{c}J_t-r\\
J_{xx}-\Omega^2\Upsilon J_t
-\Omega^2\Upsilon^\prime J-q_x+p_t-2\Upsilon p
\end{array}\!\right],\end{equation}
where, supposing that 
$\Upsilon^{\prime\prime}+2\Upsilon\Upsilon^\prime\not=0$, 
$$J\equiv\frac{p_{tt}-2\Upsilon p_t-2\Upsilon^\prime p-q_{xt}+r_{xx}
-\Omega^2(\Upsilon r_t+2\Upsilon^\prime r+2\Upsilon^2r)}
{\Omega^2(\Upsilon^{\prime\prime}+2\Upsilon\Upsilon^\prime)}.$$
\begin{prop}\label{RxR}The sequence 
$$\textstyle\Wedge^1\xrightarrow{\,{\mathcal{K}}\,}\bigodot^2\!\Wedge^1
\xrightarrow{\,{\mathrm{Khavkine}}\,}
\begin{array}{c}\Wedge^0\\[-4pt] \oplus\\[-2pt] \Wedge^0\end{array}$$
is a locally exact complex on ${\mathbb{R}}\times{\mathbb{R}}$. 
\end{prop}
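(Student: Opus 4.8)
The whole argument turns on the auxiliary scalar $J$ appearing in \eqref{khavkine_operator}, which (thanks to the standing assumption $\Upsilon''+2\Upsilon\Upsilon'\neq 0$ that keeps its denominator nonzero) is built to reconstruct the $dt$-component of a prospective Killing field. The plan is to prove the single identity
$$J=\xi\qquad\text{whenever}\qquad p=X_x+\Omega^2\Upsilon\xi,\quad q=\xi_x+X_t-2\Upsilon X,\quad r=\xi_t,$$
i.e.~whenever $[p,q,r]$ is in the range of ${\mathcal{K}}$, and to observe that both the complex property and local exactness fall out of this identity together with one compatibility computation for recovering the $dx$-component~$X$.

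To prove the identity, I would substitute the three expressions for $p,q,r$ directly into the numerator of $J$. The one piece of bookkeeping that drives everything is $(\Omega^2\Upsilon)_t=\Omega^2(2\Upsilon^2+\Upsilon')$ together with $(\Omega^2)_t=2\Omega^2\Upsilon$. With these in hand, every term containing $X$ cancels because mixed partial derivatives commute, the remaining derivative terms in $\xi$ cancel in pairs, and the surviving undifferentiated-$\xi$ contribution collapses to exactly $\Omega^2(\Upsilon''+2\Upsilon\Upsilon')\xi$, which is the denominator of $J$ times $\xi$. Hence $J=\xi$. Granting this, the first component of \eqref{khavkine_operator} reads $J_t-r=\xi_t-\xi_t=0$, while substituting $J=\xi$ into the second component and re-expanding $p$ and $q$ makes every term cancel; thus ${\mathrm{Khavkine}}\circ{\mathcal{K}}=0$ and we have a complex.

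For local exactness, suppose $[p,q,r]$ lies in the kernel and set $\xi:=J$. Vanishing of the first component gives $\xi_t=J_t=r$, the third Killing equation. It then remains to find $X$ with $X_x=p-\Omega^2\Upsilon\xi=:A$ and $X_t-2\Upsilon X=q-\xi_x=:B$. Multiplying through by the integrating factor $\Omega^{-2}$ turns this into the statement that $\Omega^{-2}X$ has prescribed gradient $(\Omega^{-2}A,\Omega^{-2}B)$, so a local solution exists (by the Poincar\'e lemma) exactly when $\partial_t(\Omega^{-2}A)=\partial_x(\Omega^{-2}B)$, equivalently $A_t-2\Upsilon A-B_x=0$. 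A direct calculation, using $\xi_t=r$ and $\xi=J$, shows that this single scalar integrability condition is \emph{identical} to the vanishing of the second component of \eqref{khavkine_operator}. The kernel hypothesis therefore supplies precisely the integrability needed; integrating produces $X$, and the pair $[X,\xi]$ satisfies all three Killing equations, so $[p,q,r]$ is in the range of~${\mathcal{K}}$.

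The main obstacle is purely computational: verifying that the numerator of $J$ collapses to $\Omega^2(\Upsilon''+2\Upsilon\Upsilon')\xi$ on the range, and that the compatibility condition for $X$ reproduces the second Khavkine component. Neither is conceptually deep, but each is unforgiving of a single misplaced coefficient among the many $t$-derivatives of $\Omega$ and $\Upsilon$. The structural point worth emphasising is that one and the same algebraic identity governs both directions: read forwards it says $J=\xi$, and read backwards it certifies that the choice $\xi:=J$ is consistent, with the second component of \eqref{khavkine_operator} serving in each case as the cross-derivative compatibility for~$X$.
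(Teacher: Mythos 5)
Your proposal is correct and takes essentially the same route as the paper: the identity $J=\xi$ on the range of ${\mathcal{K}}$, the first component $J_t-r$ forcing $\xi_t=r$, and the second component of \eqref{khavkine_operator} serving as the compatibility condition for recovering $X$ --- your integrating-factor $\Omega^{-2}$ and Poincar\'e-lemma argument is precisely the flatness of the connection $X\mapsto[X_x,\,X_t-2\Upsilon X]$ that the paper invokes. The two computations you flag do check out (the numerator of $J$ collapses to $\Omega^2(\Upsilon''+2\Upsilon\Upsilon')\xi$ via $(\Omega^2\Upsilon)_t=\Omega^2(2\Upsilon^2+\Upsilon')$, and $A_t-2\Upsilon A-B_x$ reproduces the second Khavkine component exactly).
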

\begin{proof} A computation shows that the composition
$$\textstyle\Wedge^1\xrightarrow{\,{\mathcal{K}}\,}\bigodot^2\!\Wedge^1
\xrightarrow{\,J\,}\Wedge^0$$
sends $X\,dx+\xi\,dt$ to $\xi$. Therefore, the first component $J_t-r$ of the
Khavkine operator forces $r=\xi_t$. This effectively eliminates $\xi$ from
$X\,dx+\xi\,dt$ and the second component of the Khavkine operator is exactly
what is needed as a consequence of
$$X\,dx\longmapsto\left[\!\begin{array}{c}X_x\\
X_t-2\Upsilon X\end{array}\!\right]$$
defining a flat connection.
\end{proof}
Notice that the integrability operator (\ref{khavkine_operator}) is of fourth
order. The key to Khavkine's construction is the following observation.
\begin{lemma}\label{khavkine_key}
For a general semi-Riemannian metric, the composition
$$\Wedge^1=\begin{picture}(6,6)(0,0)
\put(0,0){\line(1,0){6}}
\put(0,6){\line(1,0){6}}
\put(0,0){\line(0,1){6}}
\put(6,0){\line(0,1){6}}
\end{picture}\xrightarrow{\,{\mathcal{K}}\,}
\begin{picture}(12,6)(0,0)
\put(0,0){\line(1,0){12}}
\put(0,6){\line(1,0){12}}
\put(0,0){\line(0,1){6}}
\put(6,0){\line(0,1){6}}
\put(12,0){\line(0,1){6}}
\end{picture}\xrightarrow{\,{\mathcal{C}}\,}
\begin{picture}(12,12)(0,2)
\put(0,0){\line(1,0){12}}
\put(0,6){\line(1,0){12}}
\put(0,12){\line(1,0){12}}
\put(0,0){\line(0,1){12}}
\put(6,0){\line(0,1){12}}
\put(12,0){\line(0,1){12}}
\end{picture}\ni X_{abcd}\longmapsto X_{ab}{}^{ab}\in\Wedge^0$$
is a homomorphism sending $\sigma_b$ to $-(\nabla^bR)\sigma_b$, where $R$ is
the scalar curvature.
\end{lemma}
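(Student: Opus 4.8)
The plan is to compute the composite directly in abstract indices, exploiting the fact that the full trace $X_{ab}{}^{ab}=g^{ac}g^{bd}X_{abcd}$ collapses almost all of the structure of a Riemann-type tensor. I would work in two stages, first tracing the Calabi operator on an arbitrary symmetric $h_{ab}$ and only afterwards setting $h_{ab}=\nabla_{(a}\sigma_{b)}$. Contracting the four second-order terms of~\eqref{calabi} with $g^{ac}g^{bd}$ and using $\nabla_ag_{bc}=0$, the first and fourth terms each collapse to $\nabla^e\nabla_e\,h$, where $h\equiv g^{ab}h_{ab}$, while the second and third each give $-\nabla^{(a}\nabla^{b)}h_{ab}$. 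Using the symmetries of $R_{abcd}$, the two explicit curvature terms of~\eqref{calabi} contract to a curvature-linear algebraic expression in $h_{ab}$ built from the Ricci tensor $R_{ab}$ and the scalar curvature~$R$. In this way $X_{ab}{}^{ab}$ is reduced to a second-order operator in $h$ of the shape $2\nabla^e\nabla_e\,h-(\nabla^a\nabla^b+\nabla^b\nabla^a)h_{ab}$ plus curvature-linear algebraic terms.

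I would then substitute $h_{ab}=\nabla_{(a}\sigma_{b)}$, so that $h=\nabla^b\sigma_b$, turning the second-order part into a third-order expression in $\sigma$ and the algebraic part into a first-order one. The essential point is that the genuinely third-order terms cancel: after commuting covariant derivatives, $2\nabla^e\nabla_e(\nabla^b\sigma_b)-(\nabla^a\nabla^b+\nabla^b\nabla^a)\nabla_{(a}\sigma_{b)}$ collapses, modulo curvature, to $\nabla^e\nabla_e(\nabla^b\sigma_b)-\nabla^b(\nabla^e\nabla_e\sigma_b)$, that is, to the single commutator $[\,\nabla^e\nabla_e,\nabla^b\,]\sigma_b$, which is only first order. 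One sees in advance that no third-order part can survive, because the principal symbol of $\mathcal C\circ\mathcal K$ is insensitive to curvature and therefore equals its value in the constant-curvature case, where $\mathcal C\circ\mathcal K$ vanishes identically.

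After this cancellation the entire expression is first order in $\sigma$, and it arises from just two sources: the commutator produced above, and the two explicit curvature terms of $\mathcal C$ once $h_{ab}=\nabla_{(a}\sigma_{b)}$ is inserted. I would expand each commutator in terms of $R_{ab}{}^c{}_d$, separating the contributions in which both derivatives land on $\sigma$, which are of the form $R_{ab}\nabla^a\sigma^b$, from the single contribution in which a derivative falls on the curvature, namely $(\nabla^aR_{ab})\sigma^b$. The $R_{ab}\nabla^a\sigma^b$ contributions must cancel against those obtained by tracing the two explicit curvature terms of~\eqref{calabi}, leaving only the contribution in which the derivative has fallen on the curvature. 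Here the contracted second Bianchi identity $2\nabla^aR_{ab}=\nabla_bR$ is decisive: it rewrites $(\nabla^aR_{ab})\sigma^b$ as a multiple of $(\nabla_bR)\sigma^b$, and keeping track of the numerical factors yields precisely $-(\nabla^bR)\sigma_b$.

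The step I expect to be the main obstacle is exactly this bookkeeping of the curvature terms. Several distinct contractions of $R_{ab}{}^c{}_d$ with $\nabla\sigma$ are generated by the commutators, and one must verify that they cancel against the traced curvature terms of $\mathcal C$, so that no first-order operator in $\sigma$ remains. The calculation is mechanical once all commutators are written out, but confirming the cancellation of the $R_{ab}\nabla^a\sigma^b$ terms, and seeing the contracted Bianchi identity enter with the right coefficient to isolate $\nabla^bR$, is the delicate part; it is precisely what forces the composite to be a genuine homomorphism rather than a differential operator of positive order.
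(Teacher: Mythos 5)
Your proposal is correct, but it takes a genuinely different route from the paper's proof. The paper argues via the prolongation connection: for a general semi-Riemannian metric, the curvature of \eqref{prolongation_connection} acquires the extra term $-(\nabla^eR_{abcd})\sigma_e$ beyond the locally symmetric formula \eqref{curvature}, so by \eqref{straightforward_calculation}, when $h_{ab}=\nabla_{(a}\sigma_{b)}$ one has $({\mathcal{C}}h)_{abcd}={\mathcal{R}}(\mu)_{abcd}-(\nabla^eR_{abcd})\sigma_e$ with $\mu_{ab}=\nabla_{[a}\sigma_{b]}$; taking the full trace, the ${\mathcal{R}}(\mu)$ part dies instantly (each contraction produces a symmetric Ricci tensor against the skew form $\mu_{ab}$), leaving $-(\nabla^eR)\sigma_e$ with no commutator bookkeeping whatsoever. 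Your direct computation is precisely the alternative the paper itself flags immediately after its proof (``Alternatively, one can compute the composition \dots to be \eqref{the_composition} and check \dots''), and it does go through as you describe: tracing \eqref{calabi} yields $2\big[\Delta h_a{}^a-\nabla^a\nabla^bh_{ab}+R^{ab}h_{ab}\big]$, substituting $h_{ab}=\nabla_{(a}\sigma_{b)}$ and commuting derivatives kills all third-order terms (your symbol argument for this is a valid a priori shortcut), the $R^{ab}\nabla_a\sigma_b$ contributions enter with coefficients $-1$, $-1$ from the two commutators and $+2$ from the traced curvature terms and so cancel, and the surviving $-2(\nabla^aR_{ab})\sigma^b$ becomes $-(\nabla^bR)\sigma_b$ by the contracted Bianchi identity, exactly as you predicted at the step you identified as delicate. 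What each approach buys: the paper's argument is shorter and reuses machinery already in place, avoiding the cancellation bookkeeping entirely; yours is elementary and self-contained, needs no prolongation apparatus, and produces the explicit formula \eqref{the_composition} along the way, which the paper needs anyway to define the Khavkine operator in \S\ref{warped_products} (the paper also sketches a third, geometric proof in \S\ref{deformation}, comparing \eqref{one}, \eqref{two}, and \eqref{three} via naturality of scalar curvature under diffeomorphism).
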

\begin{proof} The curvature of the prolongation connection acquires an extra
term (beyond the locally symmetric formula (\ref{curvature})):
$$(\nabla_a\nabla_b-\nabla_b\nabla_a)
\left[\!\begin{array}{c}\sigma_c\\ \mu_{cd}\end{array}\!\right]
=\left[\!\begin{array}{c}0\\ 
2R_{ab}{}^e{}_{[c}\mu_{d]e}+2R_{cd}{}^e{}_{[a}\mu_{b]e}
-(\nabla^eR_{abcd})\sigma_e
\end{array}\!\right]$$
and the conclusion follows from~(\ref{straightforward_calculation}).
\end{proof}
Alternatively, one can compute the composition 
$\begin{picture}(12,6)(0,0)
\put(0,0){\line(1,0){12}}
\put(0,6){\line(1,0){12}}
\put(0,0){\line(0,1){6}}
\put(6,0){\line(0,1){6}}
\put(12,0){\line(0,1){6}}
\end{picture}\to
\begin{picture}(12,12)(0,2)
\put(0,0){\line(1,0){12}}
\put(0,6){\line(1,0){12}}
\put(0,12){\line(1,0){12}}
\put(0,0){\line(0,1){12}}
\put(6,0){\line(0,1){12}}
\put(12,0){\line(0,1){12}}
\end{picture}\to\Wedge^0$ to be 
\begin{equation}\label{the_composition}
h_{ab}\longmapsto
2\big[\Delta h_a{}^a-\nabla^a\nabla^bh_{ab}+R^{ab}h_{ab}\big]\end{equation}
and check that, when $h_{ab}=\nabla_{(a}\sigma_{b)}$, this formula
gives~$-(\nabla^bR)\sigma_b$. For a warped product (\ref{warped}) with the
metric on $M$ being locally symmetric, it is clear that the scalar curvature
$R$ is a function of $t$ alone. Therefore, supposing that $\partial R/\partial
t\not=0$, we may define
$$J(h_{ab})\equiv
-2\frac{\Delta h_a{}^a-\nabla^a\nabla^bh_{ab}+R^{ab}h_{ab}}
{\partial R/\partial t}$$
and, in case that $h_{ab}=\nabla_{(a}\sigma_{b)}$, conclude from 
Lemma~\ref{khavkine_key} that 
$$J(h_{ab})=\frac{\partial}{\partial t}\intprod \sigma_b,$$
which was the first step in our proof of Proposition~\ref{RxR}. More 
generally, it
is straightforward to combine this conclusion with the prolongation 
connection and Calabi operator
on $M$ to manufacture a complex
$$\textstyle \Wedge^1\xrightarrow{\,{\mathcal{K}}\,}\bigodot^2\!\Wedge^1
\begin{picture}(55,0)
\put(3,3){\vector(1,0){50}}
\put(3,6){\vector(3,1){50}}
\put(3,0){\vector(3,-1){50}}
\end{picture}\raisebox{-3pt}{$\begin{array}{c}
\Wedge^0\\[-4pt]
\oplus\\[-2pt]
\begin{picture}(12,6)(0,0)
\put(0,0){\line(1,0){12}}
\put(0,6){\line(1,0){12}}
\put(0,0){\line(0,1){6}}
\put(6,0){\line(0,1){6}}
\put(12,0){\line(0,1){6}}
\end{picture}\,\Wedge^{1,0}\\[-4pt] \oplus\\
\overline{\begin{picture}(12,11)(0,2)
\put(0,0){\line(1,0){12}}
\put(0,6){\line(1,0){12}}
\put(0,12){\line(1,0){12}}
\put(0,0){\line(0,1){12}}
\put(6,0){\line(0,1){12}}
\put(12,0){\line(0,1){12}}
\end{picture}\,\Wedge^{1,0}}\,,
\end{array}$}$$
which is locally exact unless $M$ has at least one flat factor and at 
least one Hermitian factor (as in Theorem~\ref{main_thm}). The details, in the 
constant curvature case, are in~\cite{K}. 

Also treated in~\cite{K} is the $4$-dimensional Schwarzschild metric and higher
dimensional versions thereof, warped products $S^m\times N$ for which the
warping factor $\Omega^2$ multiplying the round sphere metric is annihilated by
a Killing field on the $2$-dimensional Lorentzian manifold~$N$. Finally,
although not a warped product, the Kerr metric is treated in~\cite{AABKW}.

\subsection{The deformation operator}\label{deformation}
Apart from (\ref{calabi}), there is another natural differential operator
$\,\begin{picture}(12,6)(0,0)
\put(0,0){\line(1,0){12}}
\put(0,6){\line(1,0){12}}
\put(0,0){\line(0,1){6}}
\put(6,0){\line(0,1){6}}
\put(12,0){\line(0,1){6}}
\end{picture}\to
\begin{picture}(12,12)(0,2)
\put(0,0){\line(1,0){12}}
\put(0,6){\line(1,0){12}}
\put(0,12){\line(1,0){12}}
\put(0,0){\line(0,1){12}}
\put(6,0){\line(0,1){12}}
\put(12,0){\line(0,1){12}}
\end{picture}\,$, obtained by deforming the metric and observing the change 
in the Riemann curvature tensor. Specifically, if 
$\widetilde{g}_{ab}=g_{ab}+\epsilon h_{ab}$, then we find that
\begin{equation}\label{R_deformed}\widetilde{R}_{abcd}
=R_{abcd}-\frac{\epsilon}2\left[\!\begin{array}{c}
\nabla_{(a}\nabla_{c)}h_{bd}-\nabla_{(b}\nabla_{c)}h_{ad}
-\nabla_{(a}\nabla_{d)}h_{bc}+\nabla_{(b}\nabla_{d)}h_{ac}\\[5pt]
{}+R_{ab}{}^e{}_{[c}h_{d]e}+R_{cd}{}^e{}_{[a}h_{b]e}\end{array}\!\right]
+{\mathrm{O}}(\epsilon^2)\end{equation}
and, initially, one might be disturbed by the resulting operator
$h_{ab}\mapsto[\quad]_{abcd}$ having the opposite sign from (\ref{calabi}) in
front of the curvature terms.  To reconcile this discrepancy, let us consider
the deformation of scalar curvature
$$\textstyle
\widetilde{R}=\widetilde{g}^{ac}\widetilde{g}^{bd}\widetilde{R}_{abcd}
=(g^{ac}-\epsilon h^{ac})(g^{bd}-\epsilon h^{bd})
(R_{abcd}-\frac\epsilon2[\quad]_{abcd})+{\mathrm{O}}(\epsilon^2),$$
which simplifies as 
\begin{equation}\label{three}
\widetilde{R}=R-\epsilon[\Delta h_a{}^a-\nabla^a\nabla^bh_{ab}+R^{ab}h_{ab}]
+{\mathrm{O}}(\epsilon^2)\end{equation}
and we have obtained the operator (\ref{the_composition}), exactly as 
expected. The Killing operator is directly related to the Lie derivative of the 
metric: for any vector field $X^a$, 
\begin{equation}\label{two}{\mathcal{L}}_Xg_{ab}=2\nabla_{(a}X_{b)}.
\end{equation}
That the association of scalar curvature to a metric is co\"ordinate-free is 
also expressible in terms of Lie derivatives,
\begin{equation}\label{one}
\widetilde{g}_{ab}=g_{ab}+\epsilon{\mathcal{L}}_Xg_{ab}\enskip
\Longrightarrow\enskip\widetilde{R}=R+\epsilon{\mathcal{L}}_XR
+{\mathrm{O}}(\epsilon^2)=R+\epsilon X^e\nabla_eR+{\mathrm{O}}(\epsilon^2).
\end{equation}
Comparing (\ref{one}), (\ref{two}), and (\ref{three}) gives a geometric proof
of Lemma~\ref{khavkine_key}. Indeed, this is the interpretation of the operator
(\ref{the_composition}) given by Khavkine~\cite{K}.

An alternative potential reconciliation, already suggested by Gasqui and
Goldschmidt \cite[p.~207]{GG} (see also \cite[\S2.2]{Khavkine}), is to 
consider the operator
$$\textstyle D_g:\bigodot^2\!\Wedge^1\to\Wedge^2\otimes\Wedge^2$$
obtained by deforming the Riemann curvature viewed as {\em an operator\/}
$R_{ab}{}^{cd}:\Wedge^2\to\Wedge^2$, as in~(\ref{R}), and then lowering the
last two indices with the {\em undeformed\/} metric~$g_{ab}$.  {From}
(\ref{R_deformed}) we obtain
$$h_{ab}\stackrel{D_g}{\longmapsto}-\frac12\left[\!\begin{array}{c}
\nabla_{(a}\nabla_{c)}h_{bd}-\nabla_{(b}\nabla_{c)}h_{ad}
-\nabla_{(a}\nabla_{d)}h_{bc}+\nabla_{(b}\nabla_{d)}h_{ac}\\[5pt]
{}-3R_{ab}{}^e{}_{[c}h_{d]e}+R_{cd}{}^e{}_{[a}h_{b]e}\end{array}\!\right].$$
Unfortunately, this does not generally satisfy the interchange symmetry to end
up in
$\,\begin{picture}(12,12)(0,2)
\put(0,0){\line(1,0){12}}
\put(0,6){\line(1,0){12}}
\put(0,12){\line(1,0){12}}
\put(0,0){\line(0,1){12}}
\put(6,0){\line(0,1){12}}
\put(12,0){\line(0,1){12}}
\end{picture}\,$. 
Indeed, elementary representation theory shows that this happens if and only if
$g_{ab}$ is constant curvature~(\ref{cc}).  Whilst symmetrising over
$ab\leftrightarrow cd$ forces the result into 
$\,\begin{picture}(12,12)(0,2)
\put(0,0){\line(1,0){12}}
\put(0,6){\line(1,0){12}}
\put(0,12){\line(1,0){12}}
\put(0,0){\line(0,1){12}}
\put(6,0){\line(0,1){12}}
\put(12,0){\line(0,1){12}}
\end{picture}\,$
$$\textstyle\bigodot^2\!\Wedge^1\xrightarrow{\,D_g\,}
\Wedge^2\otimes\Wedge^2\longrightarrow
\,\begin{picture}(12,12)(0,2)
\put(0,0){\line(1,0){12}}
\put(0,6){\line(1,0){12}}
\put(0,12){\line(1,0){12}}
\put(0,0){\line(0,1){12}}
\put(6,0){\line(0,1){12}}
\put(12,0){\line(0,1){12}}
\end{picture}$$
and gives ($-\frac12\times$) the Calabi operator, this observation does not
appear to lead anywhere.  About as much as one can say, as already observed by
Gasqui-Goldschmidt and others, is that {\em in the constant curvature case\/},
it is clear that the composition
$$\textstyle\Wedge^1\xrightarrow{\,{\mathcal{K}}\,}
\bigodot^2\!\Wedge^1\xrightarrow{\,D_g\,}
\Wedge^2\otimes\Wedge^2$$
vanishes since
${\mathcal{L}}_X(\delta_a{}^c\delta_b{}^d-\delta_b{}^c\delta_a{}^d)=0$ for any
vector field~$X$.

\subsection{Third order compatibility}\label{third_order}
The differential operator ${\mathcal{L}}$ in the complex (\ref{key_complex}) is
second order and, as detailed in Theorem~\ref{main_thm}, is often sufficient
locally to characterise the range of the Killing operator.  That is not to say
that a higher order differential operator might not more easily provide such
integrability conditions.  Indeed, the deformation operator of
\S\ref{deformation} suggests that one might augment the Calabi operator with
$$\begin{picture}(12,6)(0,0)
\put(0,0){\line(1,0){12}}
\put(0,6){\line(1,0){12}}
\put(0,0){\line(0,1){6}}
\put(6,0){\line(0,1){6}}
\put(12,0){\line(0,1){6}}
\end{picture}\begin{picture}(30,0)
\put(5,3){\vector(1,0){25}}
\put(5,0){\vector(3,-2){25}}
\put(18,10){\makebox(0,0){${\mathcal{C}}$}}
\put(18,-20){\makebox(0,0){$\widetilde{\mathcal{C}}$}}
\end{picture}
\begin{array}[t]{c}\begin{picture}(12,12)(0,2)
\put(0,0){\line(1,0){12}}
\put(0,6){\line(1,0){12}}
\put(0,12){\line(1,0){12}}
\put(0,0){\line(0,1){12}}
\put(6,0){\line(0,1){12}}
\put(12,0){\line(0,1){12}}
\end{picture}\\[-2pt] \oplus\\[-3pt]
\begin{picture}(12,12)(0,2)
\put(0,0){\line(1,0){12}}
\put(0,6){\line(1,0){18}}
\put(0,12){\line(1,0){18}}
\put(0,0){\line(0,1){12}}
\put(6,0){\line(0,1){12}}
\put(12,0){\line(0,1){12}}
\put(18,6){\line(0,1){6}}
\end{picture}\end{array}\enskip\raisebox{-12pt}{,}$$
where $\widetilde{\mathcal{C}}$ deforms $\nabla_aR_{bcde}$ 
(with symmetries in $\begin{picture}(18,12)(0,2)
\put(0,0){\line(1,0){12}}
\put(0,6){\line(1,0){18}}
\put(0,12){\line(1,0){18}}
\put(0,0){\line(0,1){12}}
\put(6,0){\line(0,1){12}}
\put(12,0){\line(0,1){12}}
\put(18,6){\line(0,1){6}}
\end{picture}$ by the Bianchi identity). This option is pursued by Gasqui and 
Goldschmidt who demonstrate \cite[Th\'eor\`eme~7.2]{GG} that the resulting
third order integrability conditions are sufficient locally to characterise 
the range of the Killing operator on all Riemannian locally symmetric spaces. 
It is straightforward to obtain this result by augmenting the curvature 
(\ref{curvature}) of the prolongation connection. Details may be found in our 
follow-up article~\cite{CELM2}. 

\subsection{Projective differential geometry}\label{projective}
Although hidden in our presentation so far, several aspects of our
constructions are projectively invariant.  In particular, the Killing and
Calabi operators are the first two operators in a 
{\em Bernstein-Gelfand-Gelfand\/} sequence \cite{CD,CSS,HSSS} from projective
differential geometry.  From this point of view, the prolongation connection
(\ref{prolongation_connection}) should be rewritten as
$$\left[\!\begin{array}{c}\sigma_c\\ \mu_{cd}\end{array}\!\right]
\stackrel{\nabla_b}{\longmapsto}
\left[\begin{array}{c}\nabla_b\sigma_c-\mu_{bc}\\ 
\nabla_b\mu_{cd}+2\Rho_{b[c}\sigma_{d]}
\end{array}\right]
-\left[\begin{array}{c}0\\ W_{cd}{}^e{}_b\sigma_e\end{array}\right]\,,$$
where $\Rho_{ab}\equiv\frac1{n-1}R_{ab}$ is the projective Schouten tensor and
$W_{ab}{}^c{}_d$ is the projective Weyl tensor.  Correspondingly, the Calabi
operator (\ref{calabi}) should be rewritten as
$$h_{ab}\mapsto\!\!
\begin{array}[t]{l}
\big(\nabla_{(a}\nabla_{c)}\!+\!\Rho_{ac}\big)h_{bd}
-\big(\nabla_{(b}\nabla_{c)}\!+\!\Rho_{bc}\big)h_{ad}
-\big(\nabla_{(a}\nabla_{d)}\!+\!\Rho_{ad}\big)h_{bc}
+\big(\nabla_{(b}\nabla_{d)}\!+\!\Rho_{bd}\big)h_{ac}\\[3pt]
\enskip{}-W_{ab}{}^e{}_{[c}h_{d]e}-W_{cd}{}^e{}_{[a}h_{b]e},\end{array}$$
each line of which is projectively invariant (acting on symmetric tensors
$h_{ab}$ of weight~$2$).  The first line is the second `raw' BGG operator from
\cite{CD,CSS} and the whole Calabi operator is obtained by `normalisation'
according to~\cite{HSSS}.

Finally, we suspect that Beltrami's
theorem~\cite{B} should generalise to yield a projectively invariant
characterisation of locally symmetric metrics.

\raggedright\raggedbottom

\end{document}